\definecolor{olive}{rgb}{0.3, 0.4, .1}
\definecolor{dgreen}{rgb}{0.,0.5,0.}
\begin{document}

\newtheorem{theorem}{Theorem}
\newtheorem{lemma}[theorem]{Lemma}
\newtheorem{claim}[theorem]{Claim}
\newtheorem{cor}[theorem]{Corollary}
\newtheorem{prop}[theorem]{Proposition}
\newtheorem{definition}{Definition}
\newtheorem{question}[theorem]{Open Question}
\newtheorem{example}[theorem]{Example}

\numberwithin{equation}{section}
\numberwithin{theorem}{section}
\numberwithin{algorithm}{section}

 \newcommand{\F}{\mathbb{F}}
\newcommand{\K}{\mathbb{K}}
\newcommand{\D}[1]{D\(#1\)}
\def\scr{\scriptstyle}
\def\\{\cr}
\def\({\left(}
\def\){\right)}
\def\[{\left[}
\def\]{\right]}
\def\<{\langle}
\def\>{\rangle}
\def\fl#1{\left\lfloor#1\right\rfloor}
\def\rf#1{\left\lceil#1\right\rceil}
\def\le{\leqslant}
\def\ge{\geqslant}
\def\eps{\varepsilon}
\def\mand{\qquad\mbox{and}\qquad}

\newcommand{\commA}[1]{\marginpar{%
\begin{color}{red}
\vskip-\baselineskip 
\raggedright\footnotesize
\itshape\hrule \smallskip A: #1\par\smallskip\hrule\end{color}}}

\newcommand{\commD}[1]{\marginpar{%
\begin{color}{blue}
\vskip-\baselineskip 
\raggedright\footnotesize
\itshape\hrule \smallskip D: #1\par\smallskip\hrule\end{color}}}

\newcommand{\commI}[1]{\marginpar{%
\begin{color}{magenta}
\vskip-\baselineskip 
\raggedright\footnotesize
\itshape\hrule \smallskip I: #1\par\smallskip\hrule\end{color}}}

\newcommand{\commL}[1]{\marginpar{%
\begin{color}{olive}
\vskip-\baselineskip 
\raggedright\footnotesize
\itshape\hrule \smallskip L: #1\par\smallskip\hrule\end{color}}}

\newcommand{\Fq}{\mathbb{F}_q}
\newcommand{\Fp}{\mathbb{F}_p}
\newcommand{\Disc}[1]{\mathrm{Disc}\(#1\)}
\newcommand{\Res}[1]{\mathrm{Res}\(#1\)}

\def\cA{{\mathcal A}}
\def\cB{{\mathcal B}}
\def\cC{{\mathcal C}}
\def\cD{{\mathcal D}}
\def\cE{{\mathcal E}}
\def\cF{{\mathcal F}}
\def\cG{{\mathcal G}}
\def\cH{{\mathcal H}}
\def\cI{{\mathcal I}}
\def\cJ{{\mathcal J}}
\def\cK{{\mathcal K}}
\def\cL{{\mathcal L}}
\def\cM{{\mathcal M}}
\def\cN{{\mathcal N}}
\def\cO{{\mathcal O}}
\def\cP{{\mathcal P}}
\def\cQ{{\mathcal Q}}
\def\cR{{\mathcal R}}
\def\cS{{\mathcal S}}
\def\cT{{\mathcal T}}
\def\cU{{\mathcal U}}
\def\cV{{\mathcal V}}
\def\cW{{\mathcal W}}
\def\cX{{\mathcal X}}
\def\cY{{\mathcal Y}}
\def\cZ{{\mathcal Z}}

\newcommand{\sR}{\ensuremath{\mathscr{R}}}
\newcommand{\sDI}{\ensuremath{\mathscr{DI}}}
\newcommand{\DI}{\ensuremath{\mathrm{DI}}}
\newcommand{\sD}{\ensuremath{\mathscr{D}}}

\newcommand{\Nm}[1]{\mathrm{Norm}_{\,\F_{q^k}/\Fq}(#1)}

\def\Tr{\mbox{Tr}}
\newcommand{\rad}[1]{\mathrm{rad}(#1)}

\newcommand{\Orb}[1]{\mathrm{Orb}\(#1\)}
\newcommand{\aOrb}[1]{\overline{\mathrm{Orb}}\(#1\)}

\title[Counting of Dynamically Irreducible Polynomials]
{On the Complexity of Exact Counting of Dynamically Irreducible Polynomials}

\author[D. G\'omez-P\'erez]{Domingo G\'omez-P\'erez}
\address{D.G.-P.: Department of Mathematics, University of Cantabria, Santander 39005, Spain}
\email{domingo.gomez@unican.es}

\author[L. M{\'e}rai]{L{\'a}szl{\'o} M{\'e}rai}
\address{L.M.: Johann Radon Institute for
Computational and Applied Mathematics,
Austrian Academy of Sciences, Altenberger Stra\ss e 69, A-4040 Linz, Austria}
\email{laszlo.merai@oeaw.ac.at}
%

\author[I.~E.~Shparlinski]{Igor E. Shparlinski}
\address{I.E.S.: School of Mathematics and Statistics, University of New South Wales.
Sydney, NSW 2052, Australia}
\email{igor.shparlinski@unsw.edu.au}

\pagenumbering{arabic}

\maketitle

\begin{abstract} We give an efficient algorithm to enumerate all sets of $r\ge 1$ quadratic polynomials
over a finite field,  which remain irreducible under iterations and compositions.
\end{abstract}

\section{Introduction}

For a finite field $\Fq$ and a polynomial $f\in \Fq[X]$ we define
the sequence:
$$
  f^{(0)}(X)  = X, \qquad f^{(n)}(X)  = f\(f^{(n-1)}(X)\), \quad
  n =1, 2, \ldots\,.
$$
The polynomial $f^{(n)}$ is called the $n$-th iterate of the polynomial $f$.

Following the terminology established in~\cite{Ali,AyMcQ,Jon,JB}, we say that a polynomial $f\in \Fq[X]$
is {\it stable\/} if all iterates $ f^{(n)}(X)$, $n =1,2 , \ldots$,  are irreducible over $\Fq$.
However here, we prefer to use a more informative terminology of Heath-Brown and Micheli~\cite{H-BM} and instead we  call such polynomials {\it dynamically irreducible\/}.

Let $q$ be and odd prime power, and as in~\cite{JB}, for a quadratic polynomial $f(X)=aX^2+bX+c\in\Fq[X]$, $a\neq 0$ we define $\gamma=-b/(2a)$ as the unique critical point of $f$ (that is, the zero of the derivative $f'$). We remark that for $q$ even, it is known that there
does not exist quadratic stable polynomials~\cite{Ahmadi}.

Let $\sDI_q$ be the  set of dynamically irreducible quadratic polynomials over
a finite field of $q$ elements $\Fq$ and let $\DI_q = \# \sDI_q$ be their number.

Ostafe and Shparlinski~\cite{OstShp} have  shown that  for  a quadratic
polynomial $f \in \Fq[X]$ one can test whether
$f\in \sDI_q$ in time $q^{3/4+o(1)}$, see Lemma~\ref{lem:CritOrb-M} below.

G\'omez-P\'erez and  Nicol{\'a}s~\cite{GoNi}, developing some
ideas from~\cite{OstShp}, have proved that for an odd prime power $q$
we have
\begin{equation}\label{eq:stable-bound}
\frac{(q-1)^2}{4} \le \DI_q  = O(q^{5/2}\log q),
\end{equation}
where the implied constant is absolute.

These results have been generalized in~\cite{GNOS}, which in particular gives 
an upper bound 
on the number of dynamically irreducible polynomials of degree $d\ge 2$ over $\Fq$.
 
Here we consider the question of constructing the set $\sDI_q$ and exactly evaluating
its cardinality $\DI_q$.
Trivially, using the above test from~\cite{OstShp}, one can construct the set $\sDI_q$ in time $q^{15/4+o(1)}$.
It is possible to calculate $\DI_q$ faster, in time $q^{11/4+o(1)}$ if one uses the
correspondence between arbitrary and monic dynamically irreducible polynomials, see Lemma~\ref{lem:monic}
 below.  Here we give a more efficient algorithm.

\begin{theorem}\label{thm:counting}
Let $q$ be an odd prime power. Then there exists an algorithm which computes
$\DI_q$ in time $q^{9/4+o(1)}$ and  constructs the set $\sDI_q$ in time $q^{5/2+o(1)}$.
\end{theorem}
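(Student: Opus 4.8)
The plan is to reduce to monic polynomials, reparametrise them so that the relevant part of the critical orbit becomes a $v$-independent sequence translated by $v$, and then prune the admissible values of $v$ with the Weil bound before invoking the test of Lemma~\ref{lem:CritOrb-M}.

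By Lemma~\ref{lem:monic} it is enough to count the monic members of $\sDI_q$ and multiply by $q-1$: a monic $g(X)=(X-\gamma)^2+v$ lies in $\sDI_q$ precisely when the $q-1$ polynomials $a(X-\gamma/a)^2+v/a$, $a\in\Fq^{*}$, all lie in $\sDI_q$, and as $g$ ranges over the monic elements of $\sDI_q$ these lists partition $\sDI_q$. Write a monic quadratic as $(X-\gamma)^2+v$ and substitute $u=v-\gamma$, so $g(X)=(X-v+u)^2+v$. An easy induction gives $g^{(n)}(v)=a_n+v$ for all $n\ge 0$, where $a_0=0$ and $a_{n+1}=(a_n+u)^2$, and hence $g^{(n)}(\gamma)=a_{n-1}+v$ for $n\ge 1$. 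By the stability criterion for quadratics of Jones and Boston (see~\cite{JB}), $g\in\sDI_q$ if and only if $-v$ and each of $a_n+v$, $n\ge 1$, are non-squares in $\Fq$. The key point is that the sequence $(a_n)_{n\ge 1}$, and so the finite set $A(u)=\{a_n:n\ge 1\}$, depends only on $u$; thus $(X-v+u)^2+v\in\sDI_q$ if and only if $-v$ and the translates $v+a$, $a\in A(u)$, are all non-squares.

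The algorithm proceeds as follows. First precompute the quadratic character $\chi$ on $\Fq$, in time $q^{1+o(1)}$. Put $\ell=\lceil\tfrac12\log_2 q\rceil$. For each $u\in\Fq$, run the recursion $a_0=0$, $a_{n+1}=(a_n+u)^2$ until a value repeats --- in which case $A(u)$ is completely determined and $|A(u)|\le\ell$ --- or until $\ell+1$ steps have been performed with $a_0,\dots,a_{\ell+1}$ pairwise distinct. In the first case compute the set $S_u$ of $v$ meeting all the $|A(u)|+1$ linear conditions ``$-v$ a non-square'' and ``$v+a$ a non-square'' ($a\in A(u)$), by applying them one at a time and discarding the $v$ that fail; by the criterion $S_u$ is exactly the set of $v$ with $(X-v+u)^2+v\in\sDI_q$, so we add $|S_u|$ to the running total and, for the construction, record these polynomials; this costs $q^{1+o(1)}$. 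In the second case $0,a_1,\dots,a_\ell$ are pairwise distinct, so the linear forms $-v$, $v+a_1,\dots,v+a_\ell$ have pairwise distinct roots and every product of a subcollection of them is squarefree of degree at most $\ell+1$; letting $S_u=\{v:\chi(-v)=\chi(v+a_1)=\dots=\chi(v+a_\ell)=-1\}$, computed by the same incremental filtering, the Weil bound for character sums yields $|S_u|=q/2^{\ell+1}+O(\ell q^{1/2})=q^{1/2+o(1)}$. Since any $v$ with $(X-v+u)^2+v\in\sDI_q$ necessarily lies in $S_u$, we now test each $v\in S_u$ with the algorithm of Lemma~\ref{lem:CritOrb-M}, in time $q^{3/4+o(1)}$ per test, add the number of successes, and record those polynomials; the cost for this $u$ is $q^{1+o(1)}+|S_u|\,q^{3/4+o(1)}=q^{5/4+o(1)}$.

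Summing over $u\in\Fq$ computes the number of monic elements of $\sDI_q$ in time $q\cdot q^{5/4+o(1)}=q^{9/4+o(1)}$, and multiplying by $q-1$ gives $\DI_q$ within the same bound. The same run lists all $N=\DI_q/(q-1)$ monic members of $\sDI_q$; expanding each into its $q-1$ conjugates $a(X-\gamma/a)^2+v/a$ produces $\sDI_q$, and since $\DI_q=O(q^{5/2}\log q)$ by~\eqref{eq:stable-bound} this expansion, and hence the whole construction, takes time $q^{5/2+o(1)}$. Two points deserve care. The $O(\ell)$ values of $v$ at which one of the linear forms vanishes are not non-squares there; they are simply tested directly and affect nothing. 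And the two cases above are genuinely exhaustive, so no value of $u$ needs separate brute-force treatment --- this is the main technical step, since it is exactly the fact that the prefix $0,a_1,\dots,a_\ell$ has distinct entries whenever the orbit has not yet closed that makes the Weil estimate applicable. The conceptual crux, however, is the substitution $u=v-\gamma$: it turns the adjusted critical orbit into a fixed sequence $a_n(u)$ translated by $v$, which is precisely what lets the admissible $v$ be cut down to $q^{1/2+o(1)}$ candidates using only the first $O(\log q)$ orbit elements; after that the complexity bookkeeping is routine.
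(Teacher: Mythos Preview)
Your proof is correct and follows the same overall two-phase strategy as the paper: reduce to monic polynomials via Lemma~\ref{lem:monic}, prefilter with the first $O(\log q)$ orbit elements backed by the Weil bound so that only $q^{3/2+o(1)}$ candidates survive, apply the $q^{3/4+o(1)}$ orbit test of Lemma~\ref{lem:CritOrb-M} to the survivors, and finally expand each monic polynomial into its $q-1$ scalings using~\eqref{eq:stable-bound}.

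The difference is in how the prefilter is organised. The paper simply runs the first $K$ orbit steps on every monic quadratic and invokes Lemma~\ref{lem:CritOrb-K} (the bound of~\cite{GoNi}) as a black box to assert that only $O(q^{3/2}\log q)$ candidates remain. You instead reparametrise by $u=v-\gamma$, observe that the adjusted orbit becomes $\{-v\}\cup\{a_n(u)+v:n\ge1\}$ with the sequence $(a_n)$ depending on $u$ alone, and then for each fixed $u$ apply Weil directly to linear forms in $v$ with visibly distinct roots. This buys you two things: the orbit computation is shared across all $v$ with the same $u$, and the argument is self-contained, essentially reproving Lemma~\ref{lem:CritOrb-K} rather than citing it. Your extra case split (orbit closes within $\ell$ steps) is a pleasant bonus, since those $u$ are settled exactly without invoking Lemma~\ref{lem:CritOrb-M} at all, though it does not affect the worst-case bound. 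The asymptotics are identical in both approaches.
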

We give the pseudocode of the algorithm in Theorem~\ref{thm:counting} in Algorithm~\ref{alg:1}.

We also study an analogous problem in semigroups generated by several polynomials under the composition.

Let $f_1(X),\ldots, f_r(X)\in\Fq[X]$ be polynomials of positive degree. The set $\{f_1(X),\ldots,f_r(X)\}$ is called \emph{dynamically irreducible} if all the iterates $f_{i_1}\circ \ldots \circ f_{i_n}$, for $i_1,\ldots,i_n\in\{1,\ldots, r\}$ and $n \geq 1$ are irreducible.

Ferraguti, Micheli and Schnyder~\cite{FMS1} have characterized the sets of monic quadratic polynomial to be  dynamically
irreducible in terms of the unique critical points of the polynomials. We also note that the  subsequent work~\cite{FMS2} 
gives a representation  of 
the set dynamically
irreducible polynomials via  finite automata. 
Furthermore, Heath-Brown and Micheli~\cite{H-BM} have given an algorithm to test whether a set of monic polynomials is
dynamically irreducible.

Here we consider the question of how to construct the set $\sDI_q(r)$ of all sets of $r$ arbitrary
pairwise distinct quadratic polynomials over $\Fq$ which are dynamically irreducible and find its cardinality 
$$
\DI_q(r)=\#\sDI_q(r).
$$
In particular, $\DI_q(1) = \DI_q$.

Furthermore, we use $\sDI^*_q(r)$ to denote the subset  $\sDI_q(r)$
consisting of \emph{monic} quadratic polynomials and also use $\DI_q^*(r) =  \#\sDI_q^*(r)$
for its cardinality.

Let $M(q)$ and $M^*(q)$ be the size of the largest set of dynamically irreducible
non-monic and monic quadratic polynomials, respectively.
Because we consider both  the monic and non-monic cases, $M^*(q)$ in this paper correspond to  $M(q)$  in the paper by
 Heath-Brown and Micheli~\cite{H-BM}, who have proven that $M^*(q) \le 32q(\log q)^4$ in general, 
while for infinitely many finite fields $M^*(q) \ge 0.5 (\log q)^2$.

It is easy that the bound~\eqref{eq:Gamma3} below implies
$M(q) \le q^{3/2+o(1)}$. On the other hand, in Example~\ref{ex:DI poly}
we present  an explicit family of  quadratic  polynomials which shows that  $M(q)\ge(q-1)/2$
for infinitely many $q$ (namely for those for which $-1$ a square in $\F_q$).
Thus in the case of arbitrary polynomials the gap between upper and lower bounds
is  less dramatic than the exponential gap in the case of monic polynomials.

We note that the proof of  Theorem~\ref{thm:counting} is based on the close link
between the sets $\sDI_q = \sDI_q(1)$ and  $\sDI^*_q= \sDI_q^*(1)$, see Lemma~\ref{lem:monic} below.
On the other hand, for $r \ge 2$ there does not seem to
be any close relation between  $\sDI_q(r)$  and $\sDI_q^*(r)$. Accordingly, in this case
our result is weaker than for $r=1$.

We note that throughout the paper,  $o(1)$ denotes the quantity $\varepsilon(q)$,
which depends only on $q$ (and does not depend on $r$) with $\varepsilon(q)\to 0$
as $q \to \infty$.

\begin{theorem}\label{thm:dynamically-irreducible}
 Let $q$ be an odd prime power and $r\geq 2$. Then there exists an algorithm which computes $\DI_q(r)$ and constructs the set $\sDI_q(r)$ in time $ q^{r(3/2+o(1))+ 5/2}$ as $q\to \infty$ and uniformly over $r$.
 \end{theorem}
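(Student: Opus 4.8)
The plan is to combine an orbit-based reformulation of dynamical irreducibility with the a priori size bound behind~\eqref{eq:Gamma3}.

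\emph{Step 1 (an orbit criterion).} First I would extend the Ferraguti--Micheli--Schnyder criterion to arbitrary, not necessarily monic, quadratics. Writing $f_i(X)=a_i(X-\gamma_i)^2+\delta_i$ with $\delta_i=f_i(\gamma_i)$, peeling off the outermost factor of a composition and taking a norm down to $\Fq$ shows that $f_{i_1}\circ\dots\circ f_{i_n}$ is irreducible iff all of its proper suffixes are and one quadratic character value in $\Fq$ equals $-1$ (here one uses that the leading coefficient of a composition has the same quadratic character as its outermost factor). This gives: $\{f_1,\dots,f_r\}$ is dynamically irreducible iff (i) $-a_i\delta_i$ is a non-square for every $i$, and (ii) $a_if_i(w)$ is a non-square for every $i$ and every $w$ in the forward orbit $O$ of $\{\delta_1,\dots,\delta_r\}$ under the semigroup generated by $f_1,\dots,f_r$ acting on $\Fq$. (For $r=1$ this is the standard post-critical orbit criterion underlying Lemma~\ref{lem:CritOrb-M}; the non-monic, several-polynomial version is routine.)

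\emph{Step 2 (respecting sets).} For $O\subseteq\Fq$, call $f$ \emph{$O$-admissible} if $\delta_f\in O$, $f(O)\subseteq O$, $-a_f\delta_f$ is a non-square, and $a_ff(w)$ is a non-square for all $w\in O$; let $R(O)$ be the set of $O$-admissible quadratics. By Step~1 every subset of $R(O)$ is dynamically irreducible (its orbit is contained in $O$), so $R(O)$ itself is, and hence $\#R(O)\le M(q)\le q^{3/2+o(1)}$ by~\eqref{eq:Gamma3}; conversely every dynamically irreducible $r$-set $S$ is contained in $R(O)$ for $O$ its own orbit. Thus $\DI_q(r)$ and $\sDI_q(r)$ are obtained by ranging over the relevant orbits $O$, listing the at most $\binom{\#R(O)}{r}\le q^{(3/2+o(1))r}$ $r$-subsets of each $R(O)$ (each of which is automatically dynamically irreducible, so needs no further test), and deduplicating. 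The set $R(O)$ itself is small and cheap to produce from $O$, since two points of $O$ and their images essentially pin down a quadratic.

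\emph{Step 3 (enumerating the orbits) and the main obstacle.} When $\#O=1$, necessarily $R(O)=\{\,a(X-\gamma_0)^2+\gamma_0:a\in\Fq^*\text{ admissible}\,\}$ for the unique $\gamma_0\in O$, so there are $\le q$ orbits, each with $\le q$ admissible polynomials, contributing $\le q^{r+1}\le q^{(3/2)r+5/2}$ sets, enumerable within that budget. The real work is the case $\#O\ge 2$: one must enumerate all relevant orbits $O$ together with their respecting sets in total time $q^{5/2+o(1)}$. The natural route is to seed from the stable quadratics (there are $\le q^{5/2+o(1)}$ of them by~\eqref{eq:stable-bound}, constructible as in Theorem~\ref{thm:counting}) and their post-critical orbits, and to show, using~\eqref{eq:Gamma3} and the structure of orbits of quadratic maps on $\Fq$, that each seed spawns only few orbits --- equivalently, that a stable quadratic with a long post-critical orbit lies in few dynamically irreducible $r$-sets. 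Granting this, the total running time is $q^{5/2+o(1)}\cdot q^{(3/2+o(1))r}=q^{(3/2+o(1))r+5/2}$; it is uniform in $r$ because the $o(1)$'s come from~\eqref{eq:Gamma3} and from Legendre-symbol costs, both depending only on $q$, and once $r$ exceeds $\max_O\#R(O)$ the answer is $0$, output in time $q^{5/2+o(1)}$. Steps~1--2 and the case $\#O=1$ are routine; the orbit enumeration in Step~3 is where the difficulty lies.
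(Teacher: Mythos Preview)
Your Steps~1--2 set up a clean framework, but Step~3 is where the argument stalls, and the gap is real rather than cosmetic. You propose to enumerate the relevant orbits $O$ by seeding from individual stable quadratics and then asserting that ``each seed spawns only few orbits''; you do not prove this, and it is not clear it is even true in the form you need. A single stable $f$ has a post-critical orbit of size up to $q^{3/4}$ (Lemma~\ref{lem:CritOrb-M}), but the semigroup orbit $O$ of a dynamically irreducible $r$-set containing $f$ is generally much larger and is not determined by $f$ alone. Moreover, even granting an enumeration of the orbits, you still need to \emph{compute} $R(O)$ for each one; knowing abstractly that $\#R(O)\le M(q)\le q^{3/2+o(1)}$ does not tell you how to list its elements faster than by parametrising quadratics through three points of $O$, which costs $(\#O)^3$ and can be as large as $q^{9/4}$. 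So both the orbit enumeration and the computation of $R(O)$ are unaccounted for within your $q^{5/2+o(1)}$ budget.

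The paper sidesteps orbit enumeration entirely via a different three-way split: (i) all $f_i$ are scalar multiples of a single $f_1$; (ii) all $f_i$ share a common fixed critical value, i.e.\ $f_i(X)=a_i(X-b)^2+b$; (iii) some pair $f_1,f_2$ is neither proportional nor of the form in (ii). The crucial case is (iii), handled not by bounding an orbit but by Lemma~\ref{lemma:iterates-nonconstant}: for any such pair one can compute in time $q^{1+o(1)}$ a set $\Gamma\subseteq\Fq$ of size $q^{1/2+o(1)}$ (via a Weil-bound argument using only depth $K\approx\sqrt{2\log\log q/\log 2}$) that must contain the relevant images of every further $f_i$. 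Three elements of $\Gamma$ then pin down each $f_i$, giving at most $q^{3/2+o(1)}$ candidates per slot. The outer loop is over pairs $(f_1,f_2)$, of which there are at most $\DI_q^2\le q^{5+o(1)}$ --- note this is $q^{5}$, not $q^{5/2}$, and it is the $(r-2)$ remaining slots that each cost $q^{3/2+o(1)}$, yielding $q^{(3/2+o(1))r+5/2}$ overall. The ingredient you are missing is precisely Lemma~\ref{lemma:iterates-nonconstant}; without it there is no route from a single stable seed to a set of size $q^{1/2+o(1)}$ governing all the remaining polynomials.
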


We give the pseudocode of the algorithm in Theorem~\ref{thm:dynamically-irreducible} in Algorithm~\ref{alg:2}.

As a by-product of the ideas behind our algorithm, we also obtain an analogue
of the upper bound~\eqref{eq:stable-bound}:

\begin{theorem}\label{thm:dynamically-irreducible bound}
 Let $q$ be an odd prime power and $r\geq 2$. Then
 $\DI_q(r) \le q^{r(3/2+o(1))+ 2}$
 as $q \to \infty$ and uniformly over $r$.
\end{theorem}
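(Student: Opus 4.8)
The plan is to obtain the bound by adjoining polynomials one at a time; the estimate for a single extension is the crux, and it is essentially the same counting that makes the algorithm of Theorem~\ref{thm:dynamically-irreducible} enumerate only $q^{(3/2+o(1))r+2}$ candidate sets, so the bound can also be extracted from that algorithm.

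First I would record the trivial monotonicity: if $\{f_1,\dots,f_r\}$ is dynamically irreducible then so is every subset, since restricting the alphabet $\{1,\dots,r\}$ only deletes iterates; in particular each $f_i\in\sDI_q$ and each two-element subset lies in $\sDI_q(2)$. The key estimate to prove is
\[
\#\{f\in\Fq[X]\ \text{quadratic}:\ \{g,f\}\in\sDI_q(2)\}\le q^{3/2+o(1)}
\qquad\text{for every fixed }g
\]
(for $g\notin\sDI_q$ the left side is $0$, so one may assume $g\in\sDI_q$). Granting this, every element of $\sDI_q(r)$ is obtained from one of its $r$ subsets of size $r-1$ — each lying in $\sDI_q(r-1)$ — by adjoining a quadratic $f$, and the number of admissible $f$ is at most the number with $\{g_1,f\}\in\sDI_q(2)$ for a fixed $g_1$ in that subset, hence $\le q^{3/2+o(1)}$. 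Counting incidences, $r\,\DI_q(r)\le q^{3/2+o(1)}\,\DI_q(r-1)$; telescoping to $\DI_q(1)=\DI_q$ and using~\eqref{eq:stable-bound} (or just $\DI_q\le q^3$) gives an exponent $(3/2+o(1))r+1$ (respectively $+3/2$), the factor $1/r!$ only helping, and in particular $\DI_q(r)\le q^{(3/2+o(1))r+2}$; the bound is uniform in $r$ because the only $r$-dependence in the exponent is through $(3/2+o(1))r$ itself (and in any case $\DI_q(r)=0$ once $r>\DI_q$).

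For the key estimate I would invoke the Ferraguti--Micheli--Schnyder characterization together with the self-improvement behind the test of Lemma~\ref{lem:CritOrb-M}: dynamical irreducibility of $\{g,f\}$ forces a prescribed, finitely describable family of values — those obtained by applying compositions of $g$ and $f$ to the critical points $\gamma_g$ and $\gamma_f$ — to be non-squares after the appropriate leading-coefficient adjustment. The forward $g$-orbit $\cO_g$ of $\gamma_g$ is a fixed finite set, independent of $f$, and the criterion forces $f(\cO_g)$ to lie in the fixed ``surviving set'' $S_g\subseteq\Fq$ of points all of whose $g$-iterates are admissibly non-squares. If $\cO_g$ contains three distinct points then a quadratic is determined by its values there, so $f$ is one of at most $|S_g|^3$ polynomials; thus it suffices to show $|S_g|\le q^{1/2+o(1)}$ (or an equivalent effective bound), which is where the real work lies. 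The finitely many degenerate configurations — $\cO_g$ too short, or coincidences making the interpolation non-transverse — are dealt with separately, as in the $r=1$ case.

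The main obstacle is exactly this estimate on $|S_g|$ (equivalently, the shortness of the algorithm's candidate list). Imposing any fixed number of non-square conditions on the coefficients of $f$ reduces the count of quadratics only by a bounded factor — by Weil's bound it remains a positive proportion of all $q^3$ quadratics — so the required gain of roughly $q^{3/2}$ per polynomial cannot come from the ``probabilistic'' part of the criterion but must be extracted from its algebraic part: the combined critical orbit is finite and forward-invariant, the orbit of $\gamma_g$ is eventually periodic and lands on an admissibly-non-square cycle, and $f$ must carry the fixed set $\cO_g$ into the small set $S_g$; bounding $|S_g|$ then rests on combining Weil-type character sum estimates along the orbit with this periodicity, exactly as in the proof of~\eqref{eq:stable-bound}. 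Subsidiary technical points are the leading-coefficient adjustments in the non-monic form of the criterion, the exceptional loci where interpolation at the orbit points degenerates, and keeping every $o(1)$ independent of $r$, which is clean here precisely because the overcount by $r!$ works in our favour.
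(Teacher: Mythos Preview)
Your telescoping idea is sound, but the key estimate $\#\{f:\{g,f\}\in\sDI_q(2)\}\le q^{3/2+o(1)}$ for a single fixed $g$ rests on the claim $|S_g|\le q^{1/2+o(1)}$, and this is where the argument breaks. With only one polynomial $g$, the iterates $g^{(1)},\ldots,g^{(K)}$ give merely $K$ coprime irreducibles of total degree $\sim 2^{K+1}$, so the Weil bound yields
\[
|S_g|\ \le\ \frac{q}{2^K}+O\bigl(2^{K}q^{1/2}\bigr),
\]
which is optimized at $2^K\asymp q^{1/4}$ and gives only $|S_g|\le q^{3/4+o(1)}$. This is precisely the content of Lemma~\ref{lem:CritOrb-M}/Lemma~\ref{lemma:iterates-arb}, and no sharper bound is available in the paper (nor, to my knowledge, in the literature). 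Your appeal to ``exactly as in the proof of~\eqref{eq:stable-bound}'' conflates two different counts: that proof bounds the number of \emph{polynomials} whose first $K$ orbit values are nonsquares (Lemma~\ref{lem:CritOrb-K}), not the size of the surviving set $S_g$ for a fixed $g$. With the correct bound $|S_g|\le q^{3/4+o(1)}$ your interpolation step gives $|S_g|^3\le q^{9/4+o(1)}$ admissible $f$ per step, and the telescoping produces only $\DI_q(r)\le q^{(9/4+o(1))r+C}$, strictly weaker than the theorem.

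The paper avoids this by anchoring on a \emph{pair} $(f_1,f_2)$ that are not constant multiples: then the family of compositions $f_{i_1}\circ\cdots\circ f_{i_n}$ with $n\le K$ has $\sim 2^{K+1}$ pairwise coprime irreducible members (Lemma~\ref{lemma:unique}), and the Weil argument of Lemma~\ref{lemma:iterates-nonconstant} gives $\#\Gamma\le q^{1/2+o(1)}$ with $K$ only of size $\sqrt{\log\log q}$. One then pays $\DI_q^2\le q^{5+o(1)}$ for the initial pair and gains $q^{3/2+o(1)}$ per additional polynomial via interpolation at two orbit points $\delta_1,\delta_2$ plus one further constraint to pin down the leading coefficient, yielding $q^{5+o(1)}\cdot q^{(3/2+o(1))(r-2)}=q^{(3/2+o(1))r+2}$. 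The degenerate cases (all $f_i$ scalar multiples, or the special fixed-point family of Example~\ref{ex:DI poly}) are handled separately and contribute less. So the missing idea in your sketch is exactly this: you need two generators, not one, to push the surviving-set bound from $q^{3/4}$ down to $q^{1/2}$.
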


\section{Preliminaries}

We need to recall some important notions of the theory of dynamically irreducible quadratic polynomials, mainly
introduced by Jones and Boston~\cite{Jon,JB} (we recall that they are called `stable' in~\cite{JB,Jon}).

In particular, following~\cite{JB} we define the \emph{critical orbit} of $f$ as the set
$$
\Orb{f}=\{f^{(n)}(\gamma)~:~n=2,3,\ldots\}\subseteq \Fq,
$$
where $\gamma=-b/(2a)$ as the unique critical point of $f$.

We partition $\F_q$ into the sets of \emph{squares} $\cS_q$ and \emph{non-squares} $\cN_q$,
that is
$$
\cS_q = \{a^2~:~a\in \F_q\} \mand \cN_q = \F_q \setminus \cS_q.
$$

 We recall that for $a\in\Fq$ one can check whether $a \in \cN_q$ by
 evaluating its $(q-1)/2$-th power, as $a\in \cN_q$ if and only if $a^{(q-1)/2}=-1$.

By~\cite[Proposition~3]{JB}, a quadratic polynomial $f\in\Fq[X]$ is dynamically irreducible if the \emph{adjusted orbit}
$$
\aOrb{f}=\{-f(\gamma)\}\cup \Orb{f}
$$
satisfies
$$
\aOrb{f} \subseteq \cN_q.
$$

Clearly, the critical orbit $\Orb{f}$ of $f$ is a finite set. Furthermore,
by a result of Ostafe and Shparlinski~\cite{OstShp} the size of the critical orbit of a dynamically irreducible quadratic polynomial $f$
admits a nontrivial estimate. In particular, we now recall~\cite[Theorem~1]{OstShp}:

\begin{lemma}
\label{lem:CritOrb-M}
There is an absolute  constant $c_1$  such that for
$$
M = \rf{c_1 q^{3/4}}
$$
for any $f \in \sDI_q$ we have
$$
\# \Orb{f} \le M.
$$
\end{lemma}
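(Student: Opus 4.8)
The plan is to deduce the bound from an estimate on the size of the larger set
$$
B_t = \{\, x \in \F_q \ :\ f^{(j)}(x) \in \cN_q \text{ for all } 0 \le j \le t \,\},
$$
where $t$ is a parameter of order $\tfrac14 \log_2 q$ to be fixed at the end, and then to estimate $\# B_t$ by inclusion--exclusion together with the Weil bound for character sums.

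First I would record two structural facts about the critical orbit of a dynamically irreducible $f$. On one hand, the criterion of Jones and Boston recalled above (see \cite{JB}) gives $\Orb{f} \subseteq \aOrb{f} \subseteq \cN_q$. On the other hand, $\Orb{f}$ is forward invariant under $f$: if $x = f^{(n)}(\gamma)$ with $n \ge 2$, then $f^{(j)}(x) = f^{(n+j)}(\gamma) \in \Orb{f}$ for every $j \ge 0$. Putting these together, every $x \in \Orb{f}$ satisfies $f^{(j)}(x) \in \cN_q$ for all $j \ge 0$; in particular $\Orb{f} \subseteq B_t$, so it suffices to show $\# B_t = O(q^{3/4})$ for an appropriate $t$.

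To bound $\# B_t$, write $\chi$ for the quadratic character of $\F_q$ and use the pointwise inequality $\mathbf{1}[z \in \cN_q] \le (1 - \chi(z))/2$ to obtain
$$
\# B_t \ \le\ \frac{1}{2^{t+1}} \sum_{x \in \F_q} \prod_{j=0}^{t} \bigl(1 - \chi(f^{(j)}(x))\bigr)
\ =\ \frac{1}{2^{t+1}} \sum_{S \subseteq \{0,\ldots,t\}} (-1)^{|S|} \sum_{x \in \F_q} \chi\Bigl( \prod_{j \in S} f^{(j)}(x) \Bigr).
$$
The term $S = \emptyset$ contributes exactly $q / 2^{t+1}$. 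For $S \ne \emptyset$ the decisive point is that the iterates $f^{(0)}, f^{(1)}, \ldots, f^{(t)}$ are pairwise distinct irreducible polynomials: they have the distinct degrees $1, 2, 4, \ldots, 2^t$, and by definition every iterate of a dynamically irreducible polynomial is irreducible. Hence $\prod_{j \in S} f^{(j)}$ is squarefree of degree at most $2^{t+1} - 1$, so it is not a constant multiple of a perfect square, and the Weil bound gives $\bigl| \sum_{x \in \F_q} \chi( \prod_{j \in S} f^{(j)}(x) ) \bigr| \le 2^{t+1} \sqrt{q}$. Since there are fewer than $2^{t+1}$ nonempty subsets $S$, we conclude
$$
\# B_t \ \le\ \frac{q}{2^{t+1}} + 2^{t+1} \sqrt{q}.
$$
Choosing $t$ to be the least nonnegative integer with $2^{t+1} \ge q^{1/4}$ balances the two terms and yields $\#\Orb{f} \le \# B_t \le 3 q^{3/4}$ for all sufficiently large $q$; for the finitely many remaining $q$ the bound is trivial since $\#\Orb{f} \le q$, and these are absorbed by enlarging the constant $c_1$.

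The hard part is the squarefreeness step: if some product $\prod_{j \in S} f^{(j)}$ had a repeated irreducible factor, the Weil bound would degrade to the trivial estimate and the whole argument would collapse. This is exactly where dynamical irreducibility is used — not the full infinite hypothesis, but only that the first $O(\log q)$ iterates of $f$ are irreducible, together with $\Orb{f} \subseteq \cN_q$. The remaining steps are routine: the inclusion--exclusion is a formal expansion, and the optimization of $t$ loses only a bounded factor because $2^{t+1}$ ranges over powers of two, which is harmless for the form of the bound claimed.
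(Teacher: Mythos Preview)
Your argument is correct and is precisely the proof of~\cite[Theorem~1]{OstShp}, which is what the paper cites for this lemma without reproducing an argument of its own. One cosmetic point: for non-monic $f=aX^2+bX+c$ the irreducibility criterion (compare Corollary~\ref{cor:test} with $r=1$) actually reads $a^{-1}f^{(n)}(\gamma)\in\cN_q$ rather than $f^{(n)}(\gamma)\in\cN_q$, so strictly speaking you should work with $B_t=\{x:\ a^{-1}f^{(j)}(x)\in\cN_q,\ 0\le j\le t\}$; this changes nothing in the character-sum estimate since $\chi(a^{-1}F)=\chi(a^{-1})\chi(F)$, or alternatively one first passes to the monic associate via Lemma~\ref{lem:monic}.
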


The following result reduces the problem of counting dynamically irreducible polynomials to  such dynamically irreducible 
polynomials where one of them is \emph{monic}.  It is a direct extension of~\cite[Lemma~2]{GoNi}, however for completeness, we sketch a proof.

\begin{lemma}\label{lem:monic}
For a polynomial $f\in\Fq[X]$ and $a\in\Fq^*$ define
$$
T_a(f)(X)=\frac{f(aX)}{a}\in\Fq[X].
$$
Then for any $a\in\Fq^*$,  $\{f_1,\ldots, f_r\}$ is dynamically irreducible if and only if $\{T_a(f_1),\ldots, T_a(f_r)\}$ is dynamically irreducible.
\end{lemma}

\begin{proof}
For an $a\in\Fq^*$, write $\xi_a(X)=aX$. First of all, we observe  that for $a\in\Fq^*$, $f$ is irreducible if and only if $\xi_a \circ f$ or $f \circ \xi_a $ is irreducible.

Assume, that $f_{i_1}\circ \ldots \circ f_{i_n}$ is irreducible for some $n$ and $i_1,\ldots, i_n\in\{1,\ldots,r\}$. Then  
$\xi_a^{-1}\circ f_{i_1}\circ \ldots f_{i_n}\circ \xi_a$ is irreducible, and therefore
\begin{align*}
\xi_a^{-1} \circ f_{i_1}&\circ \ldots f_{i_n}\circ \xi_a\\
& =\xi_a^{-1}\circ f_{i_1}\circ \xi_a \circ \xi_a^{-1}\circ f_{i_2} \circ \ldots f_{i_{n-1}}\circ \xi_{a}\circ \xi_{a}^{-1}\circ f_{i_n}\circ \xi_a\\
& =T_a\(f_{i_i}\) \circ  \ldots \circ T_a\(f_{i_n}\)
\end{align*}
is also irreducible. 
\end{proof}

In order to get the upper bound in the equation~\eqref{eq:stable-bound},
G\'omez-P\'erez  and  Nicol{\'a}s~\cite{GoNi} estimate the number of dynamically irreducible quadratic polynomials by the number of such polynomials that there is no square among the first $O(\log q)$ elements of their critical orbit. Their result can be summarized in the following way.

\begin{lemma}
\label{lem:CritOrb-K}
There is an absolute  constant $c_2$  such that for
$$
K=\rf{\frac{\log q}{2\log 2} +c_2}
$$
and
\begin{align*}
 \cF_K=\bigl\{f(X)=X^2+bX+c  \in  \F_q[X]~:&\\
 f^{(n)}(-b/2) \in \cN_q, & \ n=2,\ldots, K \bigr\}
\end{align*}
we have $\#\cF_K=O(q^{3/2}\log q)$.
\end{lemma}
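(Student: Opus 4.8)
The plan is to count quadratic polynomials $f(X) = X^2 + bX + c$ by sorting them according to the shape of their critical orbit near its start, and then bounding each class separately. Writing $\gamma = -b/2$ for the critical point (in the monic case $a=1$), the first few values of the critical orbit are $f(\gamma), f^{(2)}(\gamma), \dots$, and membership of these values in $\cN_q$ is the obstruction that a polynomial must pass to have a chance of being dynamically irreducible. For a fixed pair $(b,c)$, the value $v_1 = f(\gamma) = \gamma^2 + b\gamma + c = c - b^2/4$ is a linear function of $c$ once $b$ is fixed, and each subsequent iterate $v_{n+1} = v_n^2 + b v_n + c$ is obtained by applying $f$. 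The key idea I would use is that fixing $b$ and $v_1$ determines $c = v_1 + b^2/4$, hence determines $f$ entirely, so the pair $(b, v_1) \in \F_q \times \F_q$ parametrizes all monic quadratics. Therefore it suffices to count pairs $(b, v_1)$ such that the resulting trajectory $v_1, v_2, \dots, v_K$ lies in $\cN_q$.

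First I would set up this parametrization carefully and observe that once $b$ is fixed, the iteration $v \mapsto v^2 + bv + c$ with $c = v_1 + b^2/4$ defines, for each choice of $v_1$, a sequence $(v_n)$; the constraint is $v_n \in \cN_q$ for $n = 1, \dots, K$. The natural approach is a character-sum / counting argument: the number of such pairs is
$$
\sum_{b \in \F_q} \#\{ v_1 \in \F_q : v_1, v_2, \dots, v_K \in \cN_q \},
$$
and one expects, heuristically, that imposing $K$ independent quadratic-residue conditions cuts the count of $v_1$ down by a factor $2^{-K}$ from $q$, giving roughly $q \cdot q/2^K$. With $K \approx \tfrac{\log q}{2\log 2} + c_2$, we have $2^K \gtrsim q^{1/2}$, so this would give $O(q \cdot q^{1/2}) = O(q^{3/2})$, and the $\log q$ factor in the statement gives room to absorb the error terms. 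To make this rigorous I would expand the indicator of $\cN_q$ using the quadratic character $\chi$ (writing $\mathbf{1}_{\cN_q}(x) = \tfrac12(1 - \chi(x))$ for $x \neq 0$, handling $x=0$ separately since $0$ is a square), obtaining a sum over $b$ of $2^{-K} \sum_{v_1} \prod_{n=1}^K (1 - \chi(v_n))$; expanding the product yields $2^K$ terms indexed by subsets $T \subseteq \{1,\dots,K\}$, each contributing $\sum_{v_1} \prod_{n \in T} \chi(v_n) = \sum_{v_1} \chi\big(\prod_{n \in T} v_n\big)$.

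The main obstacle — and this is where the real work lies — is bounding, for each fixed $b$ and each nonempty subset $T$, the character sum $\sum_{v_1 \in \F_q} \chi(P_T(v_1))$, where $P_T(X) = \prod_{n \in T} v_n(X)$ and $v_n(X) \in \F_q[X]$ is the polynomial expressing the $n$-th iterate in terms of $v_1$ (with $\deg v_n = 2^{n-1}$). By the Weil bound, such a sum is $O(\deg P_T \cdot q^{1/2}) = O(2^K q^{1/2})$ provided $P_T$ is not a constant times a perfect square in $\F_q[X]$ — that is, provided $P_T$ has a simple root. The crucial structural point to establish is that the highest-indexed factor $v_m(X)$ with $m = \max T$ is, generically in $b$, squarefree and shares no repeated-root structure forced by the lower factors, so that $P_T$ always has an odd-multiplicity root. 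This is exactly the kind of ramification/separability analysis for iterated quadratic maps that appears in~\cite{OstShp, GoNi, JB}; I would either cite or reprove the statement that for all but $O(q^{1/2})$ (or $o(q)$) values of $b$, each iterate $v_n(X)$ and the relevant products are separable, pulling those exceptional $b$ into the error term trivially (each contributes at most $q$ to the count, and there are at most $O(\text{poly}\log q)$ or $o(q)$ of them — one must check the bookkeeping gives $O(q^{3/2}\log q)$). Summing the Weil bound over all $2^K$ subsets $T$ and over $b \in \F_q$, the contribution from nonempty $T$ is $O(q \cdot 2^{-K} \cdot 2^K \cdot 2^K q^{1/2}) = O(q^{3/2} \cdot 2^K) $ — which is too big unless we are more careful, so in fact one should not expand fully but rather iterate the bound one step at a time, or group terms so that the total is $O(q^{3/2} \log q)$; the precise organization of this sum, ensuring the $2^K$ factors do not multiply out, is the delicate part and presumably uses an inductive argument on $K$ where at each stage one sums over the last coordinate, keeping only a $\sqrt{q}$ loss per step but a constant (not doubling) number of surviving main terms.
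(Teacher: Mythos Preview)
The paper does not prove this lemma itself; it is stated as a summary of the argument of G\'omez-P\'erez and Nicol\'as~\cite{GoNi}. Comparing your outline to that argument, there is a genuine gap.

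Your plan is to fix $b$ (equivalently $\gamma=-b/2$) and run the character-sum machinery in the variable $v_1=f(\gamma)$. As you correctly compute, $v_n$ is then a polynomial of degree $2^{n-1}$ in $v_1$, so the Weil bound on $\sum_{v_1}\chi(P_T(v_1))$ is of order $(\deg P_T)\,q^{1/2}$; summing over all nonempty $T\subseteq\{1,\dots,K\}$ and then over $b$ gives an error of order $q\cdot 2^{-K}\cdot 2^{2K}\cdot q^{1/2}=2^{K}q^{3/2}\approx q^{2}$, which is the trivial bound. You acknowledge this and gesture at an unspecified ``inductive'' or ``grouping'' repair; no such repair exists along these lines, because the exponential degree growth in $v_1$ is genuine and the $2^K$ Weil error terms really are individually of size up to $2^K q^{1/2}$.

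The missing idea is to reverse the roles of the two parameters. Writing $f(X)=(X-\gamma)^2+f(\gamma)$ and setting $\delta=f(\gamma)-\gamma$, one has $f^{(n)}(\gamma)=g^{(n)}(0)+\gamma$ where $g(Y)=Y^2+\delta$; the crucial point is that $u_n:=g^{(n)}(0)$ depends only on $\delta$. Hence, fixing $\delta$ and letting $\gamma$ run, the $K$ constraints become $\gamma+u_n\in\cN_q$ with the $u_n$ \emph{constants}, so the character sum is over products of at most $K$ distinct \emph{linear} factors in $\gamma$. Now Weil bounds each nonempty-$T$ term by $(|T|-1)q^{1/2}\le Kq^{1/2}$, the total error per $\delta$ is $O(Kq^{1/2})$, and summing over the $q$ values of $\delta$ gives $O(q^{3/2}\log q)$. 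The $O(K2^K)=q^{1/2+o(1)}$ values of $\delta$ for which $u_1,\dots,u_K$ are not pairwise distinct are handled with the trivial bound $q$ each and still contribute $O(q^{3/2}\log q)$. In short, your overall strategy is right, but you must sum over $\gamma$ with $\delta$ fixed rather than over $v_1$ (or $c$) with $\gamma$ fixed; this collapses the degrees from $2^{n-1}$ to $1$ and is exactly what makes the Weil bound effective.
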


In the following we extend some results of Ferraguti, Micheli and Schnyder~\cite{FMS1} and Heath-Brown and Micheli~\cite{H-BM} about dynamically irreducible sets for non-monic quadratic polynomials.

First we need the following result of Jones and Boston~\cite{JB}
(here we state the result in a corrected form, see~\cite{H-BM}).

\begin{lemma}\label{lemma:Jones&Boston} 
  Let $q$ be an odd prime and let $f(X)=aX^2+bX+c\in\Fq[X]$ and $\gamma=-b/(2a)$ be the unique critical point of $f$. Suppose that $g\in\Fq[X]$  has leading coefficient $e$, $g \circ f^{(n-1)}$ has degree $d$, and is irreducible over $\Fq$ for some $n\geq 1$. Then $g\circ f^{(n)}$ is irreducible over $\Fq[X]$ if and only if $(-a)^dg(f^{(n)}(\gamma))/e \in \cN_q$.
\end{lemma}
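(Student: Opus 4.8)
The plan is to set $h=g\circ f^{(n-1)}$, so that $h$ is irreducible over $\Fq$ of degree $d$ with leading coefficient $e$, and $g\circ f^{(n)}=h\circ f$. The question thus reduces to: given an irreducible $h$, when is $h\circ f$ irreducible? First I would pass to the extension field $\F_{q^d}$. Let $\theta\in\F_{q^d}$ be a root of $h$, so $\Fq(\theta)=\F_{q^d}$ and the conjugates of $\theta$ are $\theta,\theta^q,\dots,\theta^{q^{d-1}}$; writing $h(X)=e\prod_{i=0}^{d-1}(X-\theta^{q^i})$ gives $h(f(X))=e\prod_{i=0}^{d-1}(f(X)-\theta^{q^i})$. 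For any root $\beta$ of $h\circ f$, the element $f(\beta)$ is a root of $h$, so $\Fq\subseteq\Fq(f(\beta))\subseteq\Fq(\beta)$ with $[\Fq(f(\beta)):\Fq]=d$ and $[\Fq(\beta):\Fq(f(\beta))]$ dividing $\deg f=2$. Hence $[\Fq(\beta):\Fq]\le 2d=\deg(h\circ f)$, with equality (forcing $h\circ f$ irreducible) exactly when $f(X)-\theta$ is irreducible over $\F_{q^d}$. This is the key reduction.

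Next I would use that $f$ is quadratic. Completing the square, $f(X)-\theta=a(X-\gamma)^2+f(\gamma)-\theta$, so this quadratic has a root in $\F_{q^d}$ precisely when $(\theta-f(\gamma))/a$ is a square in $\F_{q^d}$. Thus $f(X)-\theta$ is irreducible over $\F_{q^d}$ if and only if $(\theta-f(\gamma))/a$ is a non-square in $\F_{q^d}$. To transport this down to $\Fq$, I would invoke the standard identity $\alpha^{(q^d-1)/2}=\mathrm{Norm}_{\F_{q^d}/\Fq}(\alpha)^{(q-1)/2}$, valid because $\mathrm{Norm}_{\F_{q^d}/\Fq}(\alpha)=\alpha^{(q^d-1)/(q-1)}$; it shows that $\alpha\in\F_{q^d}^*$ is a non-square if and only if $\mathrm{Norm}_{\F_{q^d}/\Fq}(\alpha)\in\cN_q$.

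It then remains to compute the norm. Since $a,f(\gamma)\in\Fq$ are fixed by the Frobenius, $\mathrm{Norm}_{\F_{q^d}/\Fq}\bigl((\theta-f(\gamma))/a\bigr)=a^{-d}\prod_{i=0}^{d-1}(\theta^{q^i}-f(\gamma))$. Evaluating the factorisation of $h$ at $f(\gamma)$ gives $h(f(\gamma))=e\prod_{i=0}^{d-1}(f(\gamma)-\theta^{q^i})=(-1)^d e\prod_{i=0}^{d-1}(\theta^{q^i}-f(\gamma))$, so the norm equals $(-1)^d a^{-d}h(f(\gamma))/e=h(f(\gamma))/\bigl((-a)^d e\bigr)$. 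Finally, by associativity of composition $f^{(n)}(\gamma)=f^{(n-1)}(f(\gamma))$, so $h(f(\gamma))=g\bigl(f^{(n-1)}(f(\gamma))\bigr)=g(f^{(n)}(\gamma))$, and the norm becomes $g(f^{(n)}(\gamma))/\bigl((-a)^d e\bigr)$. This differs from the quantity $(-a)^d g(f^{(n)}(\gamma))/e$ appearing in the statement only by the perfect square $\bigl((-a)^d\bigr)^2$, so the two have the same quadratic character, and the stated criterion follows.

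I expect the main obstacle to be making the degree-tower reduction in the first paragraph fully rigorous (in particular checking that passing from $\theta$ to a conjugate does not change whether $f(X)-\theta$ is irreducible over $\F_{q^d}$, which is immediate from Galois invariance), together with the careful sign and leading-coefficient bookkeeping in the norm computation. The single genuinely useful observation is that the factor $(-a)^d$ may appear in the numerator or denominator without affecting the quadratic character, which is precisely what reconciles my computed norm with the form written in the statement.
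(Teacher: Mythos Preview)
The paper does not actually prove this lemma: it is quoted from Jones and Boston~\cite{JB} (in the corrected form of~\cite{H-BM}) and used as a black box, so there is no in-paper argument to compare against. Your proof is correct and is essentially the standard Capelli-type argument underlying the Jones--Boston result: reduce to whether $f(X)-\theta$ is irreducible over $\F_{q^d}$ for a root $\theta$ of $h=g\circ f^{(n-1)}$, then translate the square condition on $(\theta-f(\gamma))/a$ in $\F_{q^d}$ down to $\Fq$ via the norm, and identify the norm with $(-a)^{-d}h(f(\gamma))/e$. Your bookkeeping of signs and leading coefficients is right, and your observation that $(-a)^d$ may sit in numerator or denominator without affecting the quadratic character is exactly what matches your computed quantity to the stated one.
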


As a corollary, we get the characterization of dynamically irreducible sets of $r$ quadratic polynomials.

\begin{cor}\label{cor:test}
 Let $q$ be an odd prime. Let $f_i(X)=a_iX^2+b_iX+c_i\in\Fq[X]$ be irreducible quadratic polynomials for $1\leq i\leq r$. Write $\gamma_i=-b_i/(2a_i)$. Then $f_1,\ldots, f_r$ form a dynamically irreducible set if and only if for all integers $n\geq 1$ and $1\leq i_1,\ldots, i_n\leq r$ we have
 \begin{equation}\label{eq:general_req}
  a_{i_1}^{-1} (f_{i_1}\circ \ldots \circ f_{i_n})(\gamma_{i_n}) \in \cN_q.
 \end{equation}
\end{cor}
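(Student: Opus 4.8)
The plan is to prove both implications at once by a double induction on $n$, using Lemma~\ref{lemma:Jones&Boston} as the only tool. First note that, by definition, the set $\{f_1,\ldots,f_r\}$ is dynamically irreducible exactly when, for every $n\ge 1$ and every index string $(i_1,\ldots,i_n)\in\{1,\ldots,r\}^n$, the composition $F_{i_1\ldots i_n}:=f_{i_1}\circ\cdots\circ f_{i_n}$ is irreducible over $\Fq$; call this assertion $P(n)$. The layer $P(1)$ is precisely the standing hypothesis that each $f_i$ is irreducible, which (spelling it out) is the same as $-a_i^{-1}f_i(\gamma_i)\in\cN_q$. So it suffices to show, for each fixed $n\ge 2$, that under $P(n-1)$ one has $P(n)$ if and only if~\eqref{eq:general_req} holds for every string of length $n$.

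For the inductive step I would fix a string $(i_1,\ldots,i_n)$ and peel off the innermost factor, writing $F_{i_1\ldots i_n}=g\circ f_{i_n}$ with $g:=F_{i_1\ldots i_{n-1}}$ and $f_{i_n}=a_{i_n}X^2+b_{i_n}X+c_{i_n}$ having critical point $\gamma_{i_n}$. By $P(n-1)$ the outer factor $g$ is irreducible of degree $d=2^{n-1}$, with some leading coefficient $e$. Applying Lemma~\ref{lemma:Jones&Boston} with this $g$, with $f=f_{i_n}$, and with the lemma's index equal to $1$ — so that $g\circ f^{(0)}=g$ and $g\circ f^{(1)}=g\circ f_{i_n}=F_{i_1\ldots i_n}$ — shows that $F_{i_1\ldots i_n}$ is irreducible if and only if
$$
(-a_{i_n})^{d}\,\frac{g\bigl(f_{i_n}(\gamma_{i_n})\bigr)}{e}\in\cN_q .
$$
Since $g\bigl(f_{i_n}(\gamma_{i_n})\bigr)=F_{i_1\ldots i_n}(\gamma_{i_n})$, what remains is to simplify the two remaining scalar factors modulo squares.

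This bookkeeping is routine, since membership in $\cN_q$ is unchanged under multiplication by a square and $x$, $x^{-1}$ differ by a square. For $n\ge 2$ the integer $d=2^{n-1}$ is even, so $(-a_{i_n})^{d}=a_{i_n}^{2^{n-1}}$ is a perfect square and may be dropped, and $1/e$ may be replaced by $e$. An easy induction on the length gives $e=\mathrm{lc}(F_{i_1\ldots i_{n-1}})=\prod_{j=1}^{n-1}a_{i_j}^{2^{j-1}}$, in which only the exponent for $j=1$ is odd; hence $e\equiv a_{i_1}\pmod{\cS_q}$, i.e. $1/e\equiv a_{i_1}^{-1}\pmod{\cS_q}$. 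Therefore the displayed condition is equivalent to $a_{i_1}^{-1}F_{i_1\ldots i_n}(\gamma_{i_n})\in\cN_q$, which is exactly~\eqref{eq:general_req} for this string. Ranging over all strings: in the forward direction one uses $P(n-1)$ to know $g$ is irreducible and reads off~\eqref{eq:general_req} from the equivalence; in the backward direction one feeds~\eqref{eq:general_req} into the same equivalence to pass from $P(n-1)$ to $P(n)$. Together with the base layer $P(1)$ this closes the induction.

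The only point that needs a little care is the legitimacy of invoking Lemma~\ref{lemma:Jones&Boston} here: it is phrased in terms of the iterates $f^{(n)}$ of a single quadratic $f$, whereas we want to compose an already-built irreducible polynomial with a possibly different quadratic. This is covered by taking the lemma's index to be $1$ and its "$g$" to be our $F_{i_1\ldots i_{n-1}}$, because then $g\circ f^{(0)}=g$ is just the requirement that $F_{i_1\ldots i_{n-1}}$ be irreducible, and $g\circ f^{(1)}=g\circ f_{i_n}$ is the single-step precomposition we need. Once this identification is made, the rest is only arithmetic modulo squares and the induction described above; I do not anticipate any deeper obstacle.
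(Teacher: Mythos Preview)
Your argument is correct and follows essentially the same route as the paper: apply Lemma~\ref{lemma:Jones&Boston} with $g=f_{i_1}\circ\cdots\circ f_{i_{n-1}}$ and $f=f_{i_n}$, compute the leading coefficient $e=a_{i_1}a_{i_2}^{2}\cdots a_{i_{n-1}}^{2^{n-2}}$, and reduce modulo squares. You are in fact a bit more explicit than the paper, which silently drops the factor $(-a_{i_n})^{d}$ and does not spell out the induction; your observation that $d=2^{n-1}$ is even for $n\ge 2$ is exactly what is needed there.
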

 
\begin{proof}
First assume that $f_1,\ldots, f_r$ form a dynamically irreducible set, that is, each iterate $f_{i_1}\circ \ldots \circ f_{i_n}$ with $n\geq 1$ and $1\leq i_1,\ldots, i_n\leq r$, is irreducible.
Applying Lemma~\ref{lemma:Jones&Boston} with $g=f_{i_1}\circ \ldots \circ f_{i_{n-1}}$ and $f=f_{i_n}$ we derive
\begin{equation}\label{eq:e}
e^{-1} (f_{i_1}\circ \ldots \circ f_{i_n})(\gamma_{i_n}) \in \cN_q,
\end{equation}
 where $e$ is the leading coefficient of $f_{i_1}\circ \ldots \circ f_{i_{n-1}}$. By induction, one can easily get, that $e=a_{i_1} a_{i_2}^2 \ldots a_{i_{n-1}}^{2^{n-2}}$.
Then~\eqref{eq:general_req} is  equivalent to~\eqref{eq:e}.

Conversely, if $f_{i_1}\circ \ldots \circ f_{i_n}$ is a reducible iterate with the smallest degree, then writing again $g=f_{i_1}\circ \ldots \circ f_{i_{n-1}}$ and $f=f_{i_n}$,
we see that~\eqref{eq:e} fails, which contradicts~\eqref{eq:general_req}.
\end{proof}

We remark that Corollary~\ref{cor:test} allows us to exhibit a large
family of  dynamically irreducible set of quadratic polynomials.

\begin{example}\label{ex:DI poly}
Let $q$ be a prime power such that $q\equiv 1\pmod 4$ 
and fix $b\in\F_q^*$. Let $f_a=a(X-b)^2+b$ for $a\in\F_q^*$. Then the set
$$\cF=\{f_{a}\mid ab \in \cN_q\}
$$
of cardinality $\#\cF=(q-1)/2$
is dynamically-irreducible.
 \end{example}

Indeed, let $r= (q-1)/2$ and take $a_1,\ldots,a_r$ such that $a_ib$ is a nonsquare  in $\F_q^*$ for all $1\le i\le r$.
We first notice that
$$
f_{a_{i_1}}\circ \cdots\circ f_{a_{i_{n}}}(X)=a_{i_1}a_{i_2}^2\cdots a_{i_n}^{2^{n-1}}(X-b)^{2^n}+b,\ 1\le i_1,\ldots,i_n\le r,
$$
and in particular that $f_{a_{i_1}}\circ \cdots\circ f_{a_{i_{n}}}(b)=b$. We apply now Corollary~\ref{cor:test}
to conclude that the set $\cF$ is dynamically-irreducible if and only if
$$
-a_{i_1}^{-1}b\in\cN_q\quad \mand \quad
a_{i_1}^{-1}b\in\cN_q\quad i_1=1,\ldots, r. 
$$
Since $q\equiv 1\pmod 4$,  we see that  $-1 \in \cS_q$ is a square, thus
the condition above is equivalent with $a_{i_1}b \in \cN_q$, which concludes our  argument.

Obviously, any subset of a set of dynamically irreducible  polynomials  $\{f_1,\ldots, f_r\}$ is also dynamically irreducible.
In particular each polynomial $f_i$ is  dynamically irreducible for $i =1,\ldots, r$.  Then by~\cite[Theorem~1]{OstShp}  we have the following result.

\begin{lemma}\label{lemma:iterates-arb} 
There is an absolute constant $c_3$ such that for
$$
 M=\lceil c_3 q^{3/4} \rceil
$$
the following holds.
If  $f_1,\ldots, f_r$ form a dynamically irreducible set of quadratic polynomials over a finite field $\Fq$ of odd characteristic and $\Gamma\subseteq \Fq$ such that
$$
 a_{i_1}^{-1} (f_{i_1}\circ \ldots \circ f_{i_n})(\gamma) \in \cN_q, \quad  n\geq 1,\ r \ge  i_1,\ldots, i_n\ge 1,
$$
for all $\gamma\in\Gamma$, then $\#\Gamma\leq M$.
\end{lemma}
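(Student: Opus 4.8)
The plan is to reduce to a single polynomial and run the character--sum argument used in the proof of~\cite[Theorem~1]{OstShp}. Since $f_1,\ldots,f_r$ form a dynamically irreducible set, the polynomial $f_1$ is in particular irreducible, and taking $i_1=\ldots=i_n=1$ in the hypothesis yields $a_1^{-1}f_1^{(n)}(\gamma)\in\cN_q$ for all $\gamma\in\Gamma$ and all $n\ge 1$. To absorb the factor $a_1^{-1}$, I would pass to the conjugate $g(X):=a_1^{-1}f_1(a_1X)$; this is again an irreducible quadratic (it has no root in $\Fq$, because $f_1$ has none), its critical point is $\delta_g=\gamma_1/a_1$, and an easy induction gives $g^{(n)}(X)=a_1^{-1}f_1^{(n)}(a_1X)$. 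Writing $\Gamma'=\{\gamma/a_1:\gamma\in\Gamma\}$, so that $\#\Gamma'=\#\Gamma$, the hypothesis becomes $g^{(n)}(u)\in\cN_q$ for every $u\in\Gamma'$ and every $n\ge 1$; equivalently $\chi\bigl(g^{(i)}(u)\bigr)=-1$ for all $u\in\Gamma'$ and all $1\le i\le k$, where $\chi$ is the quadratic character of $\Fq$ and $k\ge 1$ is a parameter to be chosen at the end.

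The heart of the argument is then to bound
$$
N_k:=\#\bigl\{u\in\Fq:\ \chi\bigl(g^{(i)}(u)\bigr)=-1\ \text{for}\ 1\le i\le k\bigr\}\ \ge\ \#\Gamma'=\#\Gamma.
$$
Expanding $\prod_{i=1}^{k}\tfrac{1-\chi(g^{(i)}(u))}{2}$ and summing over $u\in\Fq$ gives
$$
N_k\ \le\ 2^{-k}q\ +\ 2^{-k}\sum_{\emptyset\ne S\subseteq\{1,\ldots,k\}}\Bigl|\,\sum_{u\in\Fq}\chi\Bigl(\prod_{i\in S}g^{(i)}(u)\Bigr)\Bigr|.
$$
For each nonempty $S$ the polynomial $\prod_{i\in S}g^{(i)}$ has degree $\sum_{i\in S}2^{i}<2^{k+1}$, so if it is not a constant times a perfect square the Weil bound yields $\bigl|\sum_{u}\chi(\prod_{i\in S}g^{(i)}(u))\bigr|\le 2^{k+1}q^{1/2}$; summing over the $2^{k}-1$ choices of $S$ gives $N_k\le 2^{-k}q+2^{k+1}q^{1/2}$. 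Choosing $k$ so that $2^{k}$ has order $q^{1/4}$ balances the two terms and yields $\#\Gamma\le N_k=O(q^{3/4})$, that is, the asserted bound for a suitable absolute $c_3$ (the statement being trivial, since $\#\Gamma\le q$, when $q$ is bounded).

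The step that needs care --- and the main obstacle --- is verifying the squarefreeness required by the Weil bound, namely that $\prod_{i\in S}g^{(i)}$ is squarefree for every nonempty $S\subseteq\{1,\ldots,k\}$; otherwise the corresponding character sum could be of size $q$ rather than $O(2^{k}q^{1/2})$, and the estimate would collapse. I would deduce this from two facts about the iterates of the irreducible quadratic $g$. First, each $g^{(i)}$ is squarefree: using $(g^{(i)})'=\prod_{\ell=0}^{i-1}g'\bigl(g^{(\ell)}\bigr)$, a multiple root $\rho$ of $g^{(i)}$ would force $g^{(\ell)}(\rho)=\delta_g$ for some $0\le\ell<i$, hence $g^{(i-\ell)}(\delta_g)=g^{(i)}(\rho)=0$, contradicting $g^{(j)}(\delta_g)=a_1^{-1}f_1^{(j)}(\gamma_1)\ne0$ for all $j\ge1$ (the latter because $f_1$ has no root in $\Fq$). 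Second, $g^{(n)}$ and $g^{(m)}$ are coprime for $n\ne m$: a common root $u_0$ with, say, $n<m$ would give $g^{(m-n)}(0)=g^{(m-n)}\bigl(g^{(n)}(u_0)\bigr)=g^{(m)}(u_0)=0$, again impossible since $g^{(j)}(0)=a_1^{-1}f_1^{(j)}(0)\ne0$ for $j\ge1$. Hence $\prod_{i\in S}g^{(i)}$ is a product of pairwise coprime squarefree polynomials, so it is squarefree and in particular not a constant times a square, and the estimate above goes through.
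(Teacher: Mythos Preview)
Your proposal is correct and follows exactly the approach the paper indicates: restrict to the single polynomial $f_1$ and rerun the character-sum argument of~\cite[Theorem~1]{OstShp}. One small shortcut you could take: since $g(X)=a_1^{-1}f_1(a_1X)$ is precisely $(f_1)_{a_1}$ in the notation of Lemma~\ref{lem:monic}, it is itself dynamically irreducible, so every iterate $g^{(i)}$ is irreducible over $\Fq$, and the squarefreeness and pairwise coprimality that you verify by hand then come for free.
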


Given $b,c\in\Fq$, it is convenient the set 
$$
\cF_{b,c}=\{f(X) = a(X-b)^2+ac+b~:~a\in\Fq^*  \}.
$$
Then the sets $\{\cF_{b,c} \}_{b,c}$   partition the set of all quadratic polynomials. We define the equivalent relation $\sim$ according to this partition, that is $f\sim g$ if $f,g\in\cF_{b,c}$ for some $b,c$.

If the polynomials $f_1,\ldots, f_r$ are not all equivalent, one can get a better bound than Lemma~\ref{lemma:iterates-arb}. For this we need a 
generalization  of a result of  Heath-Brown and Micheli~\cite[Lemma~2]{H-BM}, 
which also applies to non-monic polynomials.

\begin{lemma}\label{lemma:unique}
Let $f_1,f_2$ be quadratic polynomials over $\Fq$ such that
$f_1/f_2\not\in\Fq$ and $f_1\not\sim f_2$. 
If
 \begin{equation}\label{eq:composition-2}
  f_{i_1}\circ\ldots\circ f_{i_n}=\alpha f_{j_1}\circ\ldots\circ f_{j_m}
 \end{equation}
with $\alpha\in\Fq$, $i_1,\ldots, i_n,j_1,\ldots, j_m\in \{1, 2\}$, then $m=n$ and $i_h=j_h$ for  $h=1, \ldots, n$.
\end{lemma}
\begin{proof}
Write $f_i(X)=a_i(X-b_i)^2+c_i$, $i\in\{1,2\}$.

Assume, that~\eqref{eq:composition-2} holds. Then compering the degrees of both sides, we have $m=n$ and $\alpha\neq 0$. Moreover, as $f_1/f_2\not \in \Fq$, we can assume, that $n\geq 2$.
Write
$$
F_k=f_{i_k}\circ\ldots\circ f_{i_n} \mand G_k=f_{j_k}\circ\ldots\circ f_{j_n} \quad \text{for } 1\leq k \leq n,
$$
and let $d_k$ and $e_k$ ($1\leq k\leq n$) be the leading coefficients of $F_k$ and $G_k$ respectively. Clearly, 
$$
d_k=a_{i_k}a^{2}_{i_{k+1}}\ldots a^{2^{n-k}}_{i_n} \mand e_k=a_{j_k}a^{2}_{j_{k+1}}\ldots a^{2^{n-k}}_{j_n}.
$$
Put $d_{n+1}=e_{n+1}=1$, $b_{i_{0}}=b_{j_{0}}=0$, $ c_{i_{n+1}}=c_{j_{n+1}}=0$ and $F_{n+1}(X)=G_{n+1}(X)=X$. Then we claim, that
\begin{equation}\label{eq:ind-1}
  \frac{c_{i_{k}}-b_{i_{k-1}}}{d_{k}}=\frac{c_{j_{k}}-b_{j_{k-1}}}{e_{k}}\quad \text{for } 1\leq k \leq n+1,
\end{equation}
\begin{equation}\label{eq:ind-2}
 \frac{1}{d_{k+1}}F_{k+1}(X)-\frac{b_{i_{k}}}{d_{k+1}}=\frac{1}{e_{k+1}}G_{k+1}(X)-\frac{b_{j_{k}}}{e_{k+1}} \quad \text{for } 1\leq k \leq n
\end{equation}and
\begin{multline}\label{eq:ind-3}
 \left(\frac{1}{d_{k+1}}F_{k+1}(X)-\frac{b_{i_{k}}}{d_{k+1}} \right)^2+\frac{c_{i_{k}}-b_{i_{k-1}}}{d_{k}}\\
 =\left(\frac{1}{e_{k+1}}G_{k+1}(X)-\frac{b_{j_k}}{e_{k+1}} \right)^2+\frac{c_{j_k}-b_{j_{k-1}}}{e_{k}}\quad \text{for } 1\leq k \leq n.
\end{multline}

Indeed, if $F_1=\alpha G_1$, then
$$
d_1\left(\left(\frac{1}{d_{2}}F_{2}(X)-\frac{b_{i_1}}{d_{2}} \right)^2+\frac{c_{i_1}}{d_{1}}\right)=\alpha e_1\left(\left(\frac{1}{e_{2}}G_{2}(X)-\frac{b_{j_1}}{e_{2}} \right)^2+\frac{c_{j_1}}{e_{1}}\right).
$$
As both $d_2^{-1}F_2$ and $e_2^{-1}G_2$ are monic, we obtain
$$
\left(\frac{1}{d_{2}}F_{2}(X)-\frac{b_{i_1}}{d_{2}} \right)^2+\frac{c_{i_1}}{d_{1}}=\left(\frac{1}{e_{2}}G_{2}(X)-\frac{b_{j_1}}{e_{2}} \right)^2+\frac{c_{j_1}}{e_{1}}
$$
which proves~\eqref{eq:ind-3} for $k=1$. Now suppose, that~\eqref{eq:ind-3} holds for some $1\leq k\leq n$. Then

\begin{align*}
&\left( \frac{1}{d_{k+1}}F_{k+1}(X)- \frac{1}{e_{k+1}}G_{k+1}(X) -\frac{b_{i_{k}}}{d_{k+1}} +\frac{b_{j_{k}}}{e_{k+1}} \right)\\
&\quad \cdot\left( \frac{1}{d_{k+1}}F_{k+1}(X)+ \frac{1}{e_{k+1}}G_{k+1}(X) -\frac{b_{i_{k}}}{d_{k+1}} -\frac{b_{j_{k}}}{e_{k+1}} \right)\\
&=\frac{b_{i_{k-1}}-c_{i_{k}}}{d_{k}}-\frac{b_{j_{k-1}}-c_{j_{k}}}{e_{k}}.
\end{align*}
As $d_{k+1}^{-1}F_{k+1}$ and $e_{k+1}^{-1}G_{k+1}$ are monic, the second term of the left hand side has positive degree, thus
$$
\frac{b_{i_{k-1}}-c_{i_{k}}}{d_{k}}=\frac{b_{j_{k-1}}-c_{j_{k}}}{e_{k}}
$$
and
\begin{equation}\label{eq:ind-4}
\frac{1}{d_{k+1}}F_{k+1}(X)-\frac{b_{i_{k}}}{d_{k+1}}=\frac{1}{e_{k+1}}G_{k+1}(X)-\frac{b_{j_{k}}}{e_{k+1}}.
\end{equation}
which prove~\eqref{eq:ind-1} and~\eqref{eq:ind-2} for $k$. Moreover, if $k\leq n-1$, then from~\eqref{eq:ind-4} we get
\begin{align*}
\left(\frac{1}{d_{k+2}}F_{k+2}(X) -\frac{b_{i_{k+1}}}{d_{k+2}}  \right)^2&+\frac{c_{i_{k+1}}}{d_{k+1}}-\frac{b_{i_{k}}}{d_{k+1}}\\
&=\left(\frac{1}{e_{k+2}}G_{k+2}(X) -\frac{b_{j_{k+1}}}{e_{k+2}}  \right)^2+\frac{c_{j_{k+1}}}{e_{k+1}}-\frac{b_{j_{k}}}{e_{k+1}}
\end{align*}
which proves~\eqref{eq:ind-3} for $k+1$.

Finally,~\eqref{eq:ind-2} for $k=n$ proves~\eqref{eq:ind-1} for $k=n+1$.

To conclude the proof, let $1\leq k_0\leq n$ be the maximal index such that $f_{i_{k_0}}\neq f_{j_{k_0}}$. Then $d_k=e_k$ for $k\geq k_0+1$, thus by~\eqref{eq:ind-1} for $k=k_0+1$, we have that $b_1=b_2$.

%
%

As $d_{k_0}=a_{i_{k_0}}d^2_{k_0+1}$ and $e_{k_0}=a_{j_{k_0}}e^2_{k_0+1}$, by~\eqref{eq:ind-1} for $k=k_0$ we have
$$
\frac{c_{i_{k_0}}-b_{k_0-1}}{a_{i_{k_0}}}=\frac{c_{j_{k_0}}-b_{k_0-1}}{a_{j_{k_0}}}.
$$
If $k_0=1$, then $c_1/a_1=c_2/a_2$, which means $f_1/f_2\in\Fq$. If $k_0\geq 2$, then
$$
\frac{c_{1}-b_1}{a_{1}}=\frac{c_{2}-b_1}{a_{2}},
$$
thus $f_1\sim f_2$.
\end{proof}

We now obtain a stronger version of Lemma~\ref{lemma:iterates-arb}
in the case when $f_1/f_2\not\in\Fq$ and $f_1\not\sim f_2$.

\begin{lemma}\label{lemma:iterates-nonconstant}
 There is an absolute constant $c_4$ such that for
 $$
  K=\left\lceil \sqrt{2\frac{\log \log q}{\log 2}}+c_4\right\rceil
 $$
the following holds.
If $f_1(X), f_2(X)\in\Fq[X]$ are two quadratic polynomials with $f_1/f_2\not\in\Fq$ and $f_1\not\sim f_2$
such that they form a dynamically-irreducible set and
$\Gamma\subseteq\Fq$ is a set with
\begin{equation}\label{eq:iterates-aa}
  a_{i_1}^{-1} (f_{i_1}\circ \ldots \circ f_{i_n})(\gamma) \in \cN_q, \quad n = 1,\ldots, K, \ i_1,\ldots, i_n\in \{1,2\} ,
\end{equation}
for all $\gamma\in\Gamma$, then $\#\Gamma \le q^{1/2}(\log q)^{1+o(1)}$.
\end{lemma}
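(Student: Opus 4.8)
The plan is first to restate the hypothesis as a statement about forward orbits. For a word $w=(i_1,\dots,i_n)$ over $\{1,2\}$ write $F_w=f_{i_1}\circ\cdots\circ f_{i_n}$, with the empty word giving $F_{\emptyset}=X$, and put
$$
S=\{\, y\in\Fq\ :\ a_1^{-1}f_1(y)\in\cN_q \text{ and } a_2^{-1}f_2(y)\in\cN_q \,\}.
$$
Since $F_{(i_1,\dots,i_n)}=f_{i_1}\circ F_{(i_2,\dots,i_n)}$, asking for~\eqref{eq:iterates-aa} for all words of length at most $K$ is the same as asking that every $\gamma\in\Gamma$ have its depth-$(K-1)$ forward tree orbit
$$
O(\gamma)=\{\, F_u(\gamma)\ :\ 0\le|u|\le K-1 \,\}
$$
entirely inside $S$; here $\#O(\gamma)\le 2^{K}-1$, with equality unless two of the values $F_u(\gamma)$ coincide.

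The key algebraic input is that $f_1(X)/f_2(X)\notin\Fq$ forces the iterates attached to distinct words to be genuinely distinct. First, each $F_w$ is irreducible over $\Fq$ (hence squarefree), since $\{f_1,f_2\}$ is dynamically irreducible. Second, I claim that for $w\neq w'$ the polynomials $F_w$ and $F_{w'}$ are not scalar multiples of one another; this is the non-monic counterpart of Lemma~\ref{lemma:unique}, and I would prove it by induction on $\max(|w|,|w'|)$. Writing $w=(i,u)$ and $w'=(j,v)$ and using the vertex form $f_i(Y)=a_i(Y-\gamma_i)^2+f_i(\gamma_i)$, an equality $F_w=cF_{w'}$ becomes $a_i(F_u-\gamma_i)^2-ca_j(F_v-\gamma_j)^2=\text{const}$; factoring the left-hand side as a difference of two squares and using that a product of two polynomials equal to a nonzero constant forces both to be constant, one gets — after comparing leading coefficients — either a contradiction (a positive-degree polynomial forced constant) or a relation $F_u=\mu F_v+\nu$ on which the induction runs again, bottoming out only in an affine identity $f_i=\mu f_j+\nu$ with $i\neq j$ and $\mu\neq 0$; this last case is excluded by a short separate argument using that $f_1,f_2$ are irreducible (their discriminants are nonsquares) and $f_1/f_2\notin\Fq$. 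This part is, I expect, the most technical. It yields two consequences: (i) for every nonempty set $W$ of words of length $\le K$, $\prod_{w\in W}F_w$ is not a constant times a perfect square in $\overline{\Fq}[X]$ (the $F_w$ being pairwise non-proportional irreducibles); and (ii) for $u\neq u'$ of length $\le K-1$ the difference $F_u-F_{u'}$ is a nonzero polynomial of degree $\le 2^{K}$, so for all but at most $2^{3K}=(\log q)^{o(1)}$ values of $\gamma$ one has $\#O(\gamma)=2^{K}-1$.

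It remains to bound $\#\Gamma$, and this is the real obstacle. Let $\chi$ be the quadratic character of $\Fq$; membership in $\Gamma$ forces $\chi(a_{i_1(w)}^{-1}F_w(\gamma))=-1$ for all $N:=2^{K+1}-2$ words $w$ of length $\le K$, hence
$$
\#\Gamma\ \le\ \sum_{\gamma\in\Fq}\ \prod_{1\le|w|\le K}\tfrac12\bigl(1-\chi(a_{i_1(w)}^{-1}F_w(\gamma))\bigr).
$$
Expanding this and using (i) with the Weil bound for each nontrivial character sum $\sum_{\gamma}\chi(\prod_{w\in W}F_w(\gamma))$ controls the non-principal terms by $\bigl(\sum_{n\le K}4^{n}\bigr)q^{1/2}=(\log q)^{o(1)}q^{1/2}$ (since $4^{K}=2^{2K}$ and $K$ is of order $\sqrt{\log\log q}$); the difficulty is that the principal term is only $q/2^{N}$, which for $K$ of the stated size (so that $N$ is merely $(\log q)^{o(1)}$) is still of order $q$. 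Thus a plain inclusion–exclusion over words does not suffice, and one must use that the $N$ conditions are not independent but are strung together along the forward orbit $O(\gamma)$. I expect the resolution to combine (ii) — which pins $\#O(\gamma)$ to its maximal value $2^{K}-1$ off an exceptional set of size $(\log q)^{o(1)}$ — with the special geometry imposed by the pair $(f_1,f_2)$ via the fact that, for $y\in S$, the quartic $((Y-\gamma_1)^{2}+a_1^{-1}f_1(\gamma_1))((Y-\gamma_2)^{2}+a_2^{-1}f_2(\gamma_2))$ takes a square value at $y$; this quartic is not a perfect square exactly because $f_1/f_2\notin\Fq$, so it cuts out a curve of genus $\le 1$ whose (projected) $\Fq$-points trap the orbit. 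Tracking this trapping down the orbit via the preimage/incidence bounds of (ii), and choosing $K=\lceil\sqrt{2(\log\log q)/\log 2}+c_4\rceil$ with $c_4$ absolute to balance everything, should yield $\#\Gamma\le q^{1/2}(\log q)^{1+o(1)}$. Converting the hypothesis into this last power-of-$q$ gain over the trivial bound is the part I would expect to require the most care.
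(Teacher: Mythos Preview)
Your setup --- expand $\prod_w \tfrac12\bigl(1-\chi(a_{i_1(w)}^{-1}F_w(\gamma))\bigr)$ over all words of length $\le K$, show the $F_w$ are pairwise non-proportional irreducibles, and apply the Weil bound --- is exactly the paper's argument; it yields
$$
\#\Gamma \ \le\ \frac{q}{2^{N}}\ +\ N\cdot 2^{K}\cdot q^{1/2},\qquad N=2^{K+1}-2.
$$
You then correctly observe that for $K\asymp\sqrt{\log_2\log q}$ one has $N=(\log q)^{o(1)}$, so $q/2^{N}$ is far larger than $q^{1/2}$ and the displayed bound is useless. This discrepancy is a slip in the paper, not in the method: the paper asserts $\#\cF=2^{\binom{K+1}{2}}$, which cannot hold since $\cF$ is indexed by the words of length $\le K$, of which there are only $2^{K+1}-2$; from that incorrect count it derives the too-small value of $K$ in the statement. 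With the correct count one simply takes $K$ of order $\log_2\log q$ (so that $N\ge\tfrac12\log_2 q$), and then the main term drops below $q^{1/2}$ while the error term is $q^{1/2}(\log q)^{O(1)}$. The larger $K$ is harmless for the later applications, which use only $2^{K}=q^{o(1)}$.

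So there is no missing idea in your character-sum approach; the ``obstacle'' you hit is an arithmetic error in the stated $K$, not a limitation of the method. The genus-$\le 1$ curve and orbit-trapping heuristic you sketch to recover an extra power of $q$ is therefore unnecessary (and, as written, it is not clear it would deliver such a saving in any case). On the coprimality step, the paper's route is shorter than your induction: it conjugates by scalings $\xi_u(X)=uX$ to convert a hypothetical identity $F_w=\alpha F_{w'}$ into an equality of compositions of \emph{monic} quadratics, and then appeals directly to the unique-factorisation result of Heath-Brown and Micheli (Lemma~\ref{lemma:unique}), bypassing the vertex-form case analysis you outline.
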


\begin{proof}
 Put
$$
  \cF=\left\{a_{i_1}^{-1}f_{i_1}\circ \ldots \circ f_{i_n}: \ 1\leq n \leq K, \  i_1,\ldots, i_n\in \{1,2\}  \right\}.
$$

Let $\chi$ be the quadratic character of $\Fq$ and define $\chi(0)=1$, see \cite[Chapter~11]{IK} for a background of characters over finite fields.
Then, by~\eqref{eq:iterates-aa}
\begin{equation}\label{eq:Gamma}
 \#\Gamma\leq  \frac{1}{2^{\#\cF}}  \sum_{\gamma\in\Fq} \prod_{F\in \cF} (1-\chi(F(\gamma))).
\end{equation}

Expanding the products and rearranging the terms, we conclude that there are $2^{\#\cF}-1$ sums of form
\begin{equation}\label{eq:Gamma_2}
   \frac{(-1)^{\mu}}{2^{\#\cF}}  \sum_{\gamma\in\Fq} \chi \left( F_1(\gamma)\cdots F_\mu(\gamma)\right), \quad F_1,\ldots,F_\mu\in \cF,
\end{equation}
with some $\mu \le \#\cF$.
As $f_1,f_2$ form a dynamically irreducible set, the polynomials $F\in \cF$ are all irreducible. Moreover by Lemma~\ref{lemma:unique} they are coprime.
Thus the product polynomials $F_1\cdots F_\mu$ in~\eqref{eq:Gamma_2} are squarefree, which enables us to estimate~\eqref{eq:Gamma_2} by the Weil bound,  see~\cite[Theorem~11.23]{IK}. As $F_1\cdots F_\mu$ has degree at most $\mu\,2^K\leq \#\cF 2^K$ we we see from~\eqref{eq:Gamma} that 
$$
  \#\Gamma\leq \frac{q}{2^{\#\cF}}+\#\cF 2^K q^{1/2}.
$$
Using that
$$
\# \cF=2^{\binom{K+1}{2}}
$$
as all compositions in the definition of the set $\cF$ are distinct and choosing $K$ such that $2^{\# \cF}=O(q^{1/2})$ we obtain the result.
\end{proof}

Combining the algorithm of~\cite[Corollary~3]{H-BM} with Lemmas~\ref{lemma:iterates-arb}
and~\ref{lemma:iterates-nonconstant} one may get in the same way the following result.

\begin{lemma}\label{lem:algorithm} 
 There is an algorithm to test whether or not a set of $r$ quadratic polynomials over $\Fq$ is dynamically irreducible, which takes $O(rq^{3/4}\log q)$ operations. Moreover if the polynomials are not constant multiple of each other and not belong to the same equivalent class with respect to $\sim$, then the algorithm takes $O(rq^{1/2}(\log q)^3)$ operations.
\end{lemma}

 Finally, we also need the following result.

\begin{lemma}\label{lemma:small_graph}
 Let $f_1\in\Fq[X]$ be  a monic  quadratic polynomial with critical point $\gamma_1\in\Fq$. There are at most $O(q)$  quadratic polynomials $f_2$ with critical points $\gamma_2\in\Fq$ such that the set
 $$
 \cG(f_1,f_2)=\{f_{i_1}\circ \ldots \circ f_{i_n}(\gamma_{i_n})~:~n\geq 1,\  i_1,\ldots, i_n\in\{1,2\}  \}
 $$
has cardinality  $\#\cG(f_1,f_2)\leq 2$.
\end{lemma}

\begin{proof}
If $\#\cG(f_1,f_2)=1$, then $f_1$ has the form $f_1(X)=(X-\gamma_1)^2+\gamma_1$ as $\Orb{f_1}\subseteq G$. As $\Orb{f_2}\subseteq G=\{\gamma_1\}$, $f_2$ has the form $f_2(X)=a(X-\gamma_1)^2+\gamma_1$.

If $\#\cG(f_1,f_2)=2$, then its elements are solutions of $f_1(X)=X$ or $(f_1\circ f_1)(X)=X$ 
(as otherwise    $\#\cG(f_1,f_2)> 2$).
 Thus there are at most $O(1)$ choices for $\cG(f_1,f_2)$. Moreover, for a fixed $\cG(f_1,f_2)=\{g_1,g_2\}$, by the Lagrange interpolation, there are at most $O(q)$ quadratic polynomials $f_2$ such that $f_2(g_1),f_2(g_2)\in \{g_1,g_2\}$.
\end{proof}

\section{Proof of Theorem~\ref{thm:counting}}
We present an algorithm which computes the list of all quadratic dynamically irreducible polynomials, see Algorithm~\ref{alg:1}.

\begin{algorithm}[ht]
 \caption{Computing $\sDI_q$}\label{alg:1}
 
\begin{algorithmic}[1]
  \State let $M$ be as in Lemma~\ref{lem:CritOrb-M}
  \State $\sD\leftarrow []$,  $\sD^*\leftarrow []$
  \For{$b,c\in\Fq$} \label{line:alg1_monic_begin}
  \State $f\leftarrow X^2+bX+c$, $\gamma \leftarrow -b/2$
  \If{$-f(\gamma)\in\cN_q$ }
  \State $n\leftarrow 2$, $\delta \leftarrow f^{(2)}(\gamma)$
  \While{$n\leq M$ \and $\delta\in\cN_q$}
  \State  $n\leftarrow n+1$, $\delta \leftarrow f(\delta) $
  \EndWhile
  \If{$n=M$}
  \State append $f$ to $\sD^*$\label{line:alg1_monic_end}
  \EndIf
  \EndIf
  \For{$a\in\Fq^*$}\label{line:alg1_nonmonic_begin}
  \State append $T_a(f)$ to $\sD$\label{line:alg1_nonmonic_end}
  \EndFor
  \EndFor

  \Return $\sD$
\end{algorithmic}
\end{algorithm}
By Lemmas~\ref{lem:CritOrb-M}, the algorithm constructs all monic dynamically irreducible polynomials in Lines~\ref{line:alg1_monic_begin}-\ref{line:alg1_monic_end}, while in 
Lines~\ref{line:alg1_nonmonic_begin}-\ref{line:alg1_nonmonic_end}, it constructs all nonmonic dynamically irreducible polynomials by Lemma~\ref{lem:monic}.

To get the time complexity of Algorithm~\ref{alg:1}, let $K$ and $\cF_K$ be as in Lemma~\ref{lem:CritOrb-K}. Then the time complexity to obtain $\sD^*$ is
$$
q^2 \cdot K \cdot q^{o(1)}+ \# \cF_K \cdot M \cdot  q^{o(1)}\leq K q^{2+o(1)}+  M q^{3/2+o(1)} \leq q^{9/4+o(1)}.
$$
by Lemmas~\ref{lem:CritOrb-M} and~\ref{lem:CritOrb-K}.
To compute $\sD$ one needs further $\DI^*(q)\cdot q^{1+(1)}=q^{5/2+o(1)}$  many steps.

For computational reasons, the lists $\sD^*$ and $\sD$ do not have to be stored, thus to compute $\DI_q$ one needs to store just the length of these lists.

\section{Proof of Theorem~\ref{thm:dynamically-irreducible}}
We now present an algorithm which constructs all sets of $r$ pairwise distinct quadratic polynomials which are dynamically irreducible, see Algorithm~\ref{alg:2}. Throughout, we follow   the  convention, that  each polynomial $f_i$ is represented by its coefficients $f_i(X)=a_iX^2+b_iX+c_i$ and we also let $\gamma_i$ to be the critical point of $f_i$.

\begin{algorithm}[ht]
 \caption{Computing $\sDI_q(r)$}\label{alg:2}

 \algrenewcommand{\algorithmiccomment}[1]{\hfill \it \#{}#1}
\begin{algorithmic}[1]
\State $\sD(r)\leftarrow []$
\State compute $\sDI_q^*$  and $\sDI_q$ by Algorithm~\ref{alg:1}
\For{$f_1\in\sDI_q^*$} \label{line:f_1}
\State \begin{varwidth}[t]{\linewidth}
	$\cH\leftarrow \big\{f_2\in \sD_q: \#\{f_{i_1}\circ \ldots \circ f_{i_n}(\gamma_{i_n}): i_1,\ldots,i_n\in\{1,2\},\linebreak[4]
	\text{\hskip\algorithmicindent} 
	1\leq n\leq 3 \}\leq 2 \big\}$
       \end{varwidth}
\State $\cG\leftarrow \{a f_1:\ a\in\Fq^* \}\cup \cH\cup \cF_{\gamma_1,f_1(\gamma_1)-\gamma_1}$
\For{$(f_2,\ldots, f_r)\in \cG^{r-1}$, $f_i\neq f_j$ for $i\neq j$}\label{line:bad_polys_begin}
\If {$\{f_1, f_2,\ldots, f_r\}$ is dynamically irreducible}
\State append $\{f_1, f_2,\ldots, f_r\}$ to $\sD(r)$\label{line:bad_polys_end}
\EndIf
\EndFor
\State let $K$ as in Lemma~\ref{lemma:iterates-nonconstant}
\For{$f_2\in \sDI_q\setminus\cG$}\label{line_f_2_begin}

\If{$\{f_1,f_2\}$ is dynamically irreducible}
\State \begin{varwidth}[t]{\linewidth}
	$\Gamma \leftarrow \big\{\lambda\in\Fq: a_{i_1}^{-1} (f_{i_1}\circ \ldots \circ f_{i_n})(\lambda) \in \cN_q, \ 1\leq n\leq K, \linebreak[4] 
	 \text{\hskip\algorithmicindent} 
	i_1,\ldots, i_n\in \{1,2\} \big\}$
       \end{varwidth}
\State \begin{varwidth}[t]{\linewidth}
        choose $\lambda_1,\lambda_2,\lambda_3\in  \{f_{i_1}\circ \ldots \circ f_{i_n}(\gamma_{i_n}): i_1,\ldots,i_n\in\{1,2\}, \linebreak[4]  
        \text{\hskip\algorithmicindent}  1\leq n\leq 3 \}$, $\lambda_i\neq\lambda_j$, $i\neq j$
       \end{varwidth}
  \For{all choices of $r-2$ vectors $\alpha_3,\dots,\alpha_r\in \Gamma^3$}\label{line:polys_begin}
    \State \label{line:LES} solve the following for $a_i,b_i,c_i\in\Fq$, ($3\leq i\leq r$):
    \begin{align*}
    \begin{pmatrix}
\lambda_1^2 & \lambda_1 & 1\\ 
\lambda_2^2 & \lambda_2 & 1\\ 
\lambda_3^2 & \lambda_3 & 1\\ 
\end{pmatrix}
\begin{pmatrix}
a_i\\ 
b_i\\ 
c_i\\ 
\end{pmatrix}
=\alpha_i    
\quad \text{for } 3\leq i\leq r
    \end{align*}
    \State\label{line:polys_end} $f_i(X)\leftarrow a_iX^2+b_iX+c_i$
  \EndFor
\If{$\{f_1,\ldots, f_r\}$ is dynamically irreducible}
\State append $\{f_1,\ldots, f_r \}$ to $\sD(r)$ \label{line_f_2_end}
\EndIf
\EndIf
\EndFor
\EndFor
\For{$a\in \Fq^*$ and $ \{f_1\ldots, f_r\}\in \sD(r)$ } \label{line:notmonic_begin}
\If{$ \{T_a(f_1)\ldots, T_a(f_r)\}\not\in \sD(r)$}
\State append $ \{T_a(f_1)\ldots, T_a(f_r)\}$ to $\sD(r)$ \label{line:notmonic_end}
\EndIf
\EndFor
\Return $\sD(r)$
\end{algorithmic}
\end{algorithm}

First we show the correctness of Algorithm~\ref{alg:2}.

Clearly $\sD(r)\subseteq \sDI(r)$. To show the equality, fix $\{f_1,\ldots, f_r\}\in \sDI(r)$. By Lines~\ref{line:notmonic_begin}-\ref{line:notmonic_end} we can assume, that $f_1$ is monic. If $\{f_1,\ldots, f_r\}$ is dynamically irreducible, then all of its subset also does. Specially, $f_1$ is dynamically irreducible monic polynomial, thus it is listed in Line~\ref{line:f_1}.

Put
$$
\cH=\big\{f_2\in \sD_q: \#\{f_{i_1}\circ \ldots \circ f_{i_n}(\gamma_{i_n}): i_1,\ldots,i_n\in\{1,2\}, 1\leq n\leq 3 \}\leq 2 \big\}
$$
It is easy to show, that $f_2\in\cH$ if and only if $G(f_1,f_2)\leq 2$. Define
$$
\cG= \{a f_1:\ a\in\Fq^* \}\cup \cH \cup\cF_{\gamma_1,f_1(\gamma_1)-\gamma_1}.
$$
If $f_2,\ldots,f_r\in\cG$, then the set $\{f_1,\ldots, f_r\}$ is considered in Lines~\ref{line:bad_polys_begin}-\ref{line:bad_polys_end}.
Thus, by rearranging the polynomials, we can assume, that $f_2\not\in\cG$.

Fix pairwise different $\lambda_1,\lambda_2,\lambda_3\in \{f_{i_1}\circ \ldots \circ f_{i_n}(\gamma_{i_n}): i_1,\ldots,i_n\in\{1,2\}, 1\leq n\leq 3 \}$
and define
\begin{equation*}
\Gamma=\left\{\gamma:\ a^{-1}_{i_1}f_{i_1}\circ \ldots \circ f_{i_n}(\gamma )\in\cN_q:\ i_1,\ldots, i_n\in \{1, 2\},\ 1\leq n\leq K\right \}.
\end{equation*}
As $\{f_1,\ldots, f_r\}$ is dynamically irreducible, $f_i(\lambda_1),f_i(\lambda_2), f_i(\lambda_3)\in \Gamma$ ($3\leq i\leq r$) by Corollary~\ref{cor:test}, thus the polynomials $f_3,\ldots,f_r$ appear as solutions of the system in Line~\ref{line:LES}. On the other hand, these systems are nonsingular as the coefficient matrix
$$
\begin{pmatrix}
\lambda_1^2 & \lambda_1 & 1\\ 
\lambda_2^2 & \lambda_2 & 1\\ 
\lambda_3^2 & \lambda_3 & 1\\ 
\end{pmatrix}
$$
is a Vandermonde with pairwise different $\lambda_1,\lambda_2,\lambda_3$.

Next, we estimate the time complexity of Algorithm~\ref{alg:2}. By Theorem~\ref{thm:counting} one can construct the sets $\sD_q^*$ and $\sD_q$ in time $q^{5/2+o(1)}$.

In Line~\ref{line:f_1} one can choose $f_1$ in   $\#\DI_q^*\leq q^{3/2+o(1)}$ ways by~\eqref{eq:stable-bound} and by Lemma~\ref{lem:monic}.

To construct the set $\cH$ one can check $\DI_q$ polynomials, which can be taken in time $O(\DI_q)=q^{5/2+o(1)}$. By Lemma~\ref{lemma:small_graph}, we have $\#\cH=O(q)$ thus
$$
\#\cG\leq q+\#\cH+\#\cF_{\gamma_1,f_1(\gamma_1)-\gamma_1}=O(q).
$$
  Hence  there are at most $q^{r-1+o(1)}$ polynomials $f_2,\ldots, f_r$ in Line~\ref{line:bad_polys_begin} to test, thus Lines~\ref{line:bad_polys_begin}-\ref{line:bad_polys_end} can be taken in time $rq^{r+1/4+o(1)}$ for a fixed $f_1$ by Lemma~\ref{lem:algorithm}.

Next, in Line~\ref{line_f_2_begin} we fix the polynomial $f_2\in \sDI_q\setminus \cG$. One can do this  in $\#(\sDI_q\setminus \cG)\leq q^{5/2+o(1)}$ ways. By Lemma~\ref{lem:algorithm}, one can check whether $\{f_1,f_2\}$ is dynamically irreducible in time $q^{1/2+o(1)}$ as $f_1/f_2\not\in \Fq$ and $f_1\not\sim f_2$.

One can  construct the set $\Gamma$ in time $q^{1+o(1)}2^K=q^{1+o(1)}$. Furthermore by Lemma~\ref{lemma:iterates-nonconstant}, we have $\#\Gamma \le q^{1/2+o(1)}$, thus one can construct the polynomials $f_i$ ($3\leq i \leq r$) in Lines~\ref{line:polys_begin}-\ref{line:polys_end} in 
\begin{equation}\label{eq:Gamma3}
(\#\Gamma)^{3}\leq q^{3/2+o(1)}
\end{equation}
ways. Finally, as $f_1/f_2\not\in\Fq$ and $f_1\not \sim f_2$, one can test if $\{f_1,\ldots, f_2\}$ is dynamically irreducible in time $rq^{1/2+o(1)}$ by Lemma~\ref{lem:algorithm}.

Summarizing, the time complexity of Lines~\ref{line:f_1}-\ref{line_f_2_end} is at most
\begin{align*}
& \DI_q^* \big( q^{5/2+o(1)}  +q+ \#\cG^{r-1}\cdot rq^{3/2+o(1)} \\
&\qquad +\DI_q\left( q^{1/2+o(1)}+q^{1+o(1)}+ (\#\Gamma)^{3(r-2)}rq^{1/2+o(1)} \right)\big)\\
&\leq rq^{3r/2+3/2+o(1)}
\end{align*}

Moreover, in Lines~\ref{line:f_1}-\ref{line_f_2_end} the algorithm construct at most
\begin{equation}\label{eq:f_1_monic}
\#\DI_q^* \left(\#\cG^{r-1} + \DI_q (\#\Gamma)^{3(r-2)}\right)\leq q^{3r/2+1+o(1)}
\end{equation}
dynamically irreducible polynomials $\{f_1,\ldots, f_r\}$ such that $f_1$ is monic. Thus the time complexity of Lines~\ref{line:notmonic_begin}-\ref{line:notmonic_end} is $q^{3r/2+2+o(1)}$.
This implies  that the complexity of Algorithm~\ref{alg:2} is
$$
rq^{3r/2+3/2+o(1)}+q^{3r/2+2+o(1)}\leq rq^{3r/2+3+o(1)}.
$$

\section{Proof of Theorem~\ref{thm:dynamically-irreducible bound}}
In the proof of Theorem~\ref{thm:dynamically-irreducible}, we have shown, that there are at most
$q^{3r/2+1+o(1)}$
dynamically irreducible polynomials $\{f_1,\ldots, f_r\}$ such that $f_1$ is monic, see~\eqref{eq:f_1_monic}.
Thus by Lemma~\ref{lem:monic}, there are at most $q^{3r/2+2+o(1)}$ set of dynamically irreducible polynomials of size $r$.

\section*{Acknowledgement}

The authors are very grateful to Alina Ostafe for several motivating discussions and providing the
construction of Example~\ref{ex:DI poly}.

Parts of this paper was written during visits of L.~M. and I.~S. to Max Planck Institute for Mathematics (Germany)
whose support and  hospitality are gratefully appreciated.

During the preparation of this  work D. G-P. is partially supported
by project MTM2014-55421-P from the Ministerio de Economia y Competitividad, 
L.~M. is partially supported by the Austrian Science Fund FWF Projects P30405 and F5511-N26
which is part of the Special Research Program ``Quasi-Monte Carlo Methods: Theory and Applications''
and  I.~S. by the Australian Research Council Grants DP170100786 and DP180100201

\end{document}